\newtheorem{proposition}{Proposition}[section]
\newtheorem{theorem}[proposition]{Theorem}
\newtheorem{corollary}[proposition]{Corollary}
\newtheorem{lemma}[proposition]{Lemma}
\theoremstyle{definition}
\newtheorem{definition}[proposition]{Definition}
\newtheorem{remark}[proposition]{Remark}
\numberwithin{equation}{section}
\def\R{\Bbb R}
\def\N{\Bbb N}
\def\Dx{\Delta_x}
\def\Nx{\nabla_x}
\def\Dt{\partial_t}
\def\({\left(}
\def\){\right)}
\def\eb{\varepsilon}
\def\Cal{\mathcal}
\def\E{\mathcal{E}}
\def\A{\mathcal{A}}
\def\Bbb{\mathbb}
\begin{document}
\title[Attractor for hyperbolic Cahn-Hilliard-Oono equation]{Finite Dimensionality of the attractor for the hyperbolic Cahn-Hilliard-Oono Equation in $\R^3$}
\author[A. Savostianov and S. Zelik] {Anton Savostianov and Sergey Zelik}

\begin{abstract} In this paper, we continue the study of the hyperbolic relaxation of the Cahn-Hilliard-Oono equation with the sub-quintic non-linearity in the whole space $\R^3$ started in our previous paper and verify that under the natural assumptions on the non-linearity and the external force, the fractal dimension of the associated global attractor in the natural energy space is finite.
\end{abstract}
\thanks{This work is partially supported by  the grant  14-41-00044 of RSF and the grant 14-01-00346 of RFBR}

\subjclass[2000]{35B40, 35B45, 35L70}

\keywords{Cahn-Hilliard-Oono equation, hyperbolic relaxation, global attractor, unbounded domain, fractal dimension}
\address{University of Surrey, Department of Mathematics, Guildford, GU2 7XH, United Kingdom.}

\email{a.savostianov@surrey.ac.uk}
\email{s.zelik@surrey.ac.uk}
\maketitle
\section{Introduction}
The classical Cahn-Hilliard (CH) equation
\begin{equation}\label{0.CH}
\Dt u+\Dx(\Dx u-f(u)+g)=0,\ \ u\big|_{t=0}=u_0,
\end{equation}
where $u=u(t,x)$ is the so-called order parameter, $f(u)$ is a given nonlinearity, $g$ is a given external force, and $\Dx$ is the Laplacian with respect to the variable $x\in\Omega\subset\R^N$, is central for the theory of phase transitions and material sciences, see \cite{CH,CMZ,Ell,No1} and references therein. It also worth to note that  the sole equation \eqref{0.CH} is not sufficient for the accurate description of the whole variety of physical phenomena arising in this theory, so a number of various modifications of this equation has been introduced, see \cite{BaE,Bo,D,DKS,EK,EK1,EKZ,ElS,GJ,GGMP1DM,GPS,MZ3,MZ2,MZ1,Miro,Oop,PZ} and references therein.
\par
One of the interesting from both mathematical and physical points of view modifications of the CH equation is the following hyperbolic relaxation of the CH equation or hyperbolic CH equation:
\begin{equation}\label{0.hyp}
\eb\Dt^2 u+\Dt u+\Dx(\Dx u-f(u)+g)=0,\ \ \eb>0,
\end{equation}
which has been introduced by P. Galenko and coauthors (see \cite{GJ,GL1,GL2,GL3}) in order to treat in a more accurate way the non-equilibrium effects in spinodal decomposition. In a fact, the inertial term $\eb\Dt^2 u$ changes drastically the type of the equation (from parabolic to hyperbolic) and the analytical properties of its solutions. Moreover, the nonlinearity $\Dx(f(u))$ becomes "critical" even if the equation is considered in the class of smooth solutions and "supercritical" if the estimate $u(t)\in L^\infty(\Omega)$ is not available. By this reason, despite a big current interest (see e.g., \cite{GGMP1D,GGMP3D,GGMP1DM,GSZ,GSZ1,GSZ3,GPS}), the global well-posedness of equation \eqref{0.hyp} in a bounded domain $\Omega\subset\R^N$ is established only in the case $N=1$ and $N=2$. Thus, in the most interesting 3D case, only global existence of weak energy solutions is known even in the case of bounded nonlinearities $f(u)$.
\par
The case when the underlying domain is a whole space $\Omega=\R^3$ is surprisingly simpler due to the recent work \cite{SZ.CHO}, where the global well-posedness of problem \eqref{0.hyp} is established in a slightly stronger (than energy ones) class of solutions (the so-called Strichartz solutions) and the nonlinearity $f$ of the sub-quintic growth rate (see below for more details). This result is obtained combining the  technique of energy estimates with the classical Strichartz estimates for the linear Schr\"odinger equation in $\R^3$, see \cite{tao} and references therein. Unfortunately these estimates do not work or unknown  for general bounded domains, see \cite{sogge1}, so the extension of these results to bounded domains remains an open problem.
\par
We also remind that, in contrast to the case of bounded domains, the dissipation is naturally lost in the long-wave limit
$u_0=u_0(\mu x)$, $\mu\to0$, in the case when $\Omega=\R^3$, so problem \eqref{0.hyp} (as well as the initial problem \eqref{0.CH}) becomes non-dissipative at least in the usual sense and does not possess a compact global attractor (see \cite{DKS} for the partial dissipativity results for the case of viscous CH equation in $\R^N$ and \cite{CM,PZ} for the non-dissipative bounds for solutions, see also \cite{Bo,EKZ} for the case of pipe-like domains where the Poincar\'e inequality restores the dissipativity). By this reason, it seems natural to take into the account one more physically relevant modification of the CH equation and consider the hyperbolic relaxation of the  Cahn-Hilliard-Oono (CHO) equation:
\begin{equation}
\label{eqCHO}
\begin{cases}
\Dt^2u+\Dt u+\Dx(\Dx u-f(u)+g)+\alpha u=0,\ x\in\R^3,\\
u\big|_{t=0}=u_0,\ \ \Dt u\big|_{t=0}=u_1,
\end{cases}
\end{equation}
where the extra term $\alpha u$, $\alpha>0$, describes the so-called long-range interactions, see \cite{Miro,Oop} for more details. From the mathematical point of view, this extra term does not change the type of the equation and the analytic properties of solutions, but on the other hand, it removes the aforementioned long wave instability and restores the dissipation mechanism which allows us to use the machinery of global attractors to study the long time behaviour of its solutions.
\par
The hyperbolic CHO equation and its global attractor are the main objects to study in the present notes. We recall that this equation possesses at least formally the energy equality in the form
\begin{equation}\label{0.en}
\frac d{dt}\(\|\Dt u\|^2_{\dot H^{-1}}+\|u\|^2_{\dot H^1}+\alpha\|u\|^2_{\dot H^{-1}}+2(F(u),1)-2(g,u)\)=-2\|\Dt u\|^2_{\dot H^{-1}},
\end{equation}
where $\dot H^s:=D((-\Dx)^s)$ are the homogeneous Sobolev spaces in $\R^3$ (with $\|u\|^2_{\dot H^s}:=((-\Dx)^su,u)$, see \cite{triebel} for more details concerning these spaces),
$F(u)=\int_0^uf(v)\,dv$ is the potential of the nonlinearity $f$ and $(u,v)$ stands for the standard inner product in $L^2(\R^3)$. Thus, the natural energy phase space for problem \eqref{eqCHO} is the following one:
\begin{equation}
\label{e.sp}
\E=[\dot{H}^1\cap \dot{H}^{-1}]\times\dot{H}^{-1},\quad \|\xi_u\|^2_\E=\|\Dt u\|^2_{\dot{H}^1}+\alpha\|u\|^2_{\dot{H}^{-1}}+\|u\|^2_{\dot H^{-1}},
\end{equation}
where $\xi_u(t):=(u(t),\Dt u(t))$.
Following \cite{SZ.CHO}, we also assume that $g\in \dot H^1$ and the nonlinear function $f\in C^2(\R^3)$ satisfies the following dissipativity and growth assumptions
\begin{equation}\label{f123}
\begin{cases}
1. & f(u)u\geq 0,\\
2. & F(u)\leq Lf(u)u+K|u|^2,\\
3. & |f''(u)|\leq C(1+|u|^{3-\kappa}),
\end{cases}
\end{equation}
for some strictly positive constants $L$, $K$ and $\kappa\in(0,3]$. The natural definition of {\it energy} solutions associated with the identity \eqref{0.en} would be $\xi_u\in C(0,T;\mathcal E)$ (or $\xi_u\in L^\infty(0,T;\E)$ (and satisfy \eqref{eqCHO} is the sense of distributions). However, as already mentioned above, the uniqueness theorem is not known for such solutions, so following again \cite{SZ.CHO}, we will use slightly stronger class of solutions.

\begin{definition}\label{def S.sol}
A function $u=u(t,x)$ is a {\it Strichartz} solution of problem \eqref{eqCHO} if, for any $T>0$,
\begin{equation}
\xi_u\in C(0,T;\Cal E),\ \ u\in L^4(0,T;C_b(\R^3))
\end{equation}
and equation \eqref{eqCHO} is satisfied as an equality in $\dot H^{-1}+\dot H^{-3}$.
\end{definition}
The extra regularity $u\in L^\infty(0,T;C_b(\R^3))$ which is based on Strichartz estimates for the Schr\"odinger equation is crucial for proving the uniqueness of the Strichartz solutions proved in \cite{SZ.CHO}. Namely, the following result is proved in \cite{SZ.CHO}.

\begin{theorem}
\label{th.wp}
Let the nonlinearity $f$ satisfy assumptions \eqref{f123} and  the external force $g\in\dot H^1(\R^3)$. Then, for every initial data $\xi_0\in\E$ there exists and unique a global Strichartz solution $u$ of problem \eqref{eqCHO} such that $\xi_u(0)=\xi_0$. Furthermore, the following dissipative estimate holds:
\begin{equation}\label{2.en-dis}
\|\xi_u(t)\|_{\Cal E}+\|u\|_{L^4([t,t+1];C_b(\R^3))}\le Q(\|\xi_u(0)\|_{\Cal E})e^{-\beta t}+Q(\|g\|_{\dot H^1}), \ \ t\ge0,
\end{equation}
where the positive constant $\beta$ and monotone function $Q$ are independent of $t\ge0$ and the solution $u$.
\end{theorem}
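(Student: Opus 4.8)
The plan is to reconstruct the argument of \cite{SZ.CHO}, the key point being that the principal linear part factorizes,
\begin{equation}
\Dt^2+\Dx^2=(\Dt-i\Dx)(\Dt+i\Dx),
\end{equation}
so that, introducing $w:=(\Dt+i\Dx)u$, the pair $(u,w)$ solves a coupled system of two inhomogeneous Schr\"odinger equations,
\begin{equation}
\Dt u+i\Dx u=w,\qquad \Dt w-i\Dx w=-\Dt u-\a u+\Dx f(u)-\Dx g,
\end{equation}
and the classical Strichartz estimates for the Schr\"odinger group in $\R^3$ become available for \eqref{eqCHO}. With this in hand one proceeds in the standard way for energy-subcritical dispersive problems: (i) extract the a priori energy bound from \eqref{0.en}; (ii) derive a priori Strichartz bounds for $\|u\|_{L^4(0,T;C_b)}$ controlled by the energy norm, using the sub-quintic growth \eqref{f123}; (iii) run a contraction-mapping argument in $X_T:=\{u:\ \xi_u\in C(0,T;\E),\ u\in L^4(0,T;C_b(\R^3))\}$ to get local existence and uniqueness; (iv) patch local solutions together using the a priori bounds to get a global solution; (v) upgrade the energy bound to the dissipative estimate \eqref{2.en-dis} by means of a Lyapunov functional exploiting the Oono term $\a u$.

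For step (i) one first checks that for $u\in X_T$ the energy identity \eqref{0.en} is rigorous; since $f(u)u\ge0$ forces $F(u)\ge0$ and $2|(g,u)|\le\tfrac\a2\|u\|_{\dot H^{-1}}^2+C\|g\|_{\dot H^1}^2$, the quantity under the time derivative in \eqref{0.en} is, up to an additive constant depending only on $\|g\|_{\dot H^1}$, equivalent to $\|\xi_u(t)\|_\E^2+2(F(u),1)$; being non-increasing it gives $\|\xi_u(t)\|_\E\le Q(\|\xi_u(0)\|_\E)$ on $[0,T]$, and in particular $u\in L^\infty(0,T;\dot H^1)\hookrightarrow L^\infty(0,T;L^6(\R^3))$.

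Step (ii) is the \emph{main obstacle}. One writes the solution of the Schr\"odinger system by Duhamel's formula and applies Strichartz estimates; after inverting the two Schr\"odinger operators, the forcing $\Dx f(u)$ contributes, modulo lower-order terms, the nonlinearity $f(u)$ itself, which must be controlled in dual Strichartz norms on $[0,T]$. From \eqref{f123} one gets $|f'(u)|\le C(1+|u|^{4-\kappa})$ and $|f(u)|\le C(1+|u|^{5-\kappa})$; using $\Nx f(u)=f'(u)\Nx u$ and interpolating the a priori bounds $u\in L^\infty(0,T;\dot H^1)$ from step (i) against $u\in L^4(0,T;C_b)$ from the Strichartz norm itself, H\"older's inequality in time produces a factor $T^{\kappa/4}$ (or a similar positive power of $T$), so that on a short interval the critical term carries a small coefficient. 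A continuity/bootstrap argument then closes the bound, giving $\|u\|_{L^4(0,T;C_b)}\le Q(T,\|\xi_u(0)\|_\E)$. The whole scheme rests on $\kappa>0$: the nonlinearity $\Dx f(u)$ is critical in the energy class, and only the strict sub-quintic growth supplies the small power of $T$ needed for the fixed point and for the continuation; the borderline quintic case is out of reach of this method.

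Uniqueness (step iii) follows by writing the equation for the difference $v:=u-\bar u$ of two Strichartz solutions with the same data, testing with $(-\Dx)^{-1}v$, using $|f(u)-f(\bar u)|\le C(1+|u|^{4-\kappa}+|\bar u|^{4-\kappa})|v|$ together with $u,\bar u\in L^4(0,T;C_b)\cap L^\infty(0,T;\dot H^1)$ to absorb the nonlinear term, and applying Gronwall's inequality. Globalization (step iv) is then automatic because the local existence time depends only on $\|\xi_u(0)\|_\E$, which is bounded for all $t\ge0$ by step (i). Finally, for the dissipative estimate (step v), testing \eqref{eqCHO} with $(-\Dx)^{-1}u$ gives, with $\Phi(t):=(\Dt u,u)_{\dot H^{-1}}+\tfrac12\|u\|_{\dot H^{-1}}^2$,
\begin{equation}
\frac d{dt}\Phi+\|u\|_{\dot H^1}^2+\a\|u\|_{\dot H^{-1}}^2+(f(u),u)=\|\Dt u\|_{\dot H^{-1}}^2+(g,u);
\end{equation}
setting $\mathcal L:=E+\delta\Phi$, where $E$ is the quantity under the time derivative in \eqref{0.en}, one checks that for $\delta>0$ small $\mathcal L$ is equivalent to $\|\xi_u\|_\E^2+2(F(u),1)$ up to a constant in $\|g\|_{\dot H^1}$, and, bounding $2(F(u),1)\le2L(f(u),u)+2K\|u\|_{L^2}^2$ via \eqref{f123} and absorbing $\|u\|_{L^2}^2\le\|u\|_{\dot H^1}\|u\|_{\dot H^{-1}}$, that $\frac d{dt}\mathcal L\le-\beta\mathcal L+C\|g\|_{\dot H^1}^2$. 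Gronwall's lemma then yields the exponential decay of $\|\xi_u(t)\|_\E$, and applying the Strichartz bound of step (ii) on each unit interval $[t,t+1]$ with this uniformly controlled energy norm as data gives the $L^4([t,t+1];C_b)$ part of \eqref{2.en-dis}.
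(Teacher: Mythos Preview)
The paper does not prove Theorem~\ref{th.wp}; it merely quotes it from \cite{SZ.CHO} (see the sentence ``Namely, the following result is proved in \cite{SZ.CHO}'' immediately preceding the statement). Your sketch is consistent with the description of the method given in the introduction --- energy estimates combined with the classical Strichartz estimates for the Schr\"odinger group, the latter made available by the factorization $\Dt^2+\Dx^2=(\Dt-i\Dx)(\Dt+i\Dx)$ --- and with the standard structure of such arguments (local theory by contraction in the Strichartz space, a priori energy bound, globalization, dissipativity via a perturbed Lyapunov functional exploiting the Oono term). There is nothing further to compare against in the present paper.
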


Thus, under the above assumptions, problem \eqref{eqCHO} generates a dissipative semigroup $S(t)$, $t\ge0$, in the energy phase space via the following expression:
\begin{equation}\label{St}
S(t):\E\to\E,\quad S(t)\xi_0=\xi_u(t),
\end{equation}
where $u$ is a Strichartz solution to equation \eqref{eqCHO} with initial data $\xi_0$. As verified in \cite{SZ.CHO}, this semigroup possesses a smooth global attractor (see Definition \ref{Def2.attr}) in the phase space $\E$. To be more precise, the following result is proved there.

\begin{theorem}\label{th.attr}
Let assumptions of Theorem \ref{th.wp} be satisfied then the semigroup $S(t)$ defined by \eqref{St} possesses a compact global attractor $\Cal A$ in in the energy phase space $\E$ which is  bounded in more regular space $\E_2:=[\dot H^3\cap\dot H^{-1}]\times[\dot H^1\cap\dot H^{-1}]$.
\end{theorem}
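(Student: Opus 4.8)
The plan is to verify the three standard ingredients for the existence of a global attractor --- continuity of the semigroup on $\mathcal E$, existence of a bounded absorbing set, and asymptotic compactness --- and then to extract the $\mathcal E_2$-boundedness of $\mathcal A$ from the smoothing estimate used for the compactness. Continuity of $S(t):\mathcal E\to\mathcal E$ for each fixed $t\ge0$ follows from the global well-posedness and the continuous (in fact Lipschitz on bounded sets) dependence of the Strichartz solution on the initial data established in Theorem \ref{th.wp} and in \cite{SZ.CHO}. The dissipative estimate \eqref{2.en-dis} immediately provides a bounded absorbing ball $\mathcal B\subset\mathcal E$, e.g. any ball centred at the origin of radius exceeding $Q(\|g\|_{\dot H^1})$, which moreover absorbs also in the $L^4([t,t+1];C_b)$-norm of $u$.

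The heart of the argument is asymptotic compactness, which on the unbounded domain $\R^3$ cannot be read off from Sobolev embeddings and must be produced by combining a hyperbolic smoothing splitting with uniform decay of the energy at spatial infinity. For the splitting I would write $\xi_u=\xi_v+\xi_w$, where $v$ solves the linear part of \eqref{eqCHO} (with $f$ and $g$ switched off) and $\xi_v(0)=\xi_u(0)$, so that $\|\xi_v(t)\|_{\mathcal E}\le C\|\xi_u(0)\|_{\mathcal E}e^{-\beta t}$, while $w$ solves \eqref{eqCHO} with zero initial data and the nonlinear and forcing terms placed on the right-hand side. Using the control $u\in L^4(0,T;C_b(\R^3))$ coming from the Strichartz estimates together with assumption \eqref{f123}(3), one bootstraps energy estimates for $w$ first to the level $\mathcal E_1:=[\dot H^2\cap\dot H^{-1}]\times[L^2\cap\dot H^{-1}]$ and then to $\mathcal E_2$, obtaining $\|\xi_w(t)\|_{\mathcal E_2}\le Q(\|\xi_u(0)\|_{\mathcal E})$ uniformly in $t\ge0$; the restriction $\kappa>0$ is exactly what renders the differentiated nonlinear terms $f'(u)\Nx u$, $f''(u)|\Nx u|^2$, etc., estimable in the required norms. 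For the decay at infinity I would test the equation against $\theta_R(x)(-\Dx)^{-1}\Dt u$ (and companion multipliers), with $\theta_R$ a smooth cut-off vanishing on $B_R$ and equal to $1$ outside $B_{2R}$, thereby obtaining a Gronwall inequality showing that for $\xi_0\in\mathcal B$ the portion of $\|S(t)\xi_0\|_{\mathcal E}$ localized to $\{|x|\ge 2R\}$ is bounded by $Ce^{-\beta t}+\varepsilon(R)$ with $\varepsilon(R)\to0$ as $R\to\infty$, uniformly on $\mathcal B$; here $g\in\dot H^1$ and the uniform $\mathcal E_2$-bound are used to control the commutator $[\theta_R,\Dx]$ and the nonlinear contributions (the latter absorbed into a localized energy functional via the potential $F$).

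Putting the two ingredients together yields the compactness: for $\xi_0\in\mathcal B$ and $t\ge T$ large, $S(t)\xi_0$ is an exponentially small (in the $\mathcal E$-norm) perturbation of $\xi_w(t)$, which lies in a fixed ball of $\mathcal E_2$ and has uniformly small energy outside $B_{2R}$; truncating to $B_{2R}$, using the compact embedding of $\mathcal E_2$ restricted to $B_{2R}$ into $\mathcal E$ restricted to $B_{2R}$, and letting $R\to\infty$ through a diagonal argument, we conclude that $\bigcup_{t\ge T}S(t)\mathcal B$ is precompact in $\mathcal E$. The abstract theorem on attractors then produces the compact global attractor $\mathcal A=\omega(\mathcal B)$; and since every $\xi\in\mathcal A$ is an $\mathcal E$-limit of points $S(t_n)\xi_{0,n}=\xi_v(t_n)+\xi_w(t_n)$ with the first summand tending to zero and the second staying in a fixed $\mathcal E_2$-ball, the closedness of bounded balls of $\mathcal E_2$ with respect to the $\mathcal E$-topology forces $\mathcal A$ to be bounded in $\mathcal E_2$, with an explicit bound in terms of $\|g\|_{\dot H^1}$. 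I expect the main obstacle to be the $\mathcal E_2$-regularity step: because the inertial term $\Dt^2u$ makes $\Dx f(u)$ effectively critical, each differentiation of the equation requires a fresh Strichartz-type estimate for the differentiated unknown and a careful interpolation of the $L^4(C_b)$-norm against the higher Sobolev norms in order to close the estimates with constants uniform in $t$.
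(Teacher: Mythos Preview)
The paper does not contain a proof of Theorem~\ref{th.attr}: the result is quoted verbatim from the companion paper \cite{SZ.CHO} (see the sentence immediately preceding the statement), and the present work only \emph{uses} it, chiefly via the consequence \eqref{2.c1}. There is therefore nothing in this paper to compare your proposal against.

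That said, your outline is the expected architecture for such a result --- dissipative estimate $\Rightarrow$ absorbing ball, a decaying/smoothing splitting $\xi_u=\xi_v+\xi_w$, and a tail estimate to compensate for the lack of compact Sobolev embeddings on $\R^3$ --- and it is broadly in the spirit of what \cite{SZ.CHO} does. Two points deserve more care than your sketch suggests. First, the single-shot bootstrap ``$\xi_w\in\mathcal E_1\Rightarrow\xi_w\in\mathcal E_2$ with a bound depending only on $\|\xi_u(0)\|_{\mathcal E}$'' does not close as written: the $\mathcal E_2$-level estimate for $w$ requires control of $\nabla^2 f(u)=f''(u)|\nabla u|^2+f'(u)\nabla^2 u$, and $\nabla^2 u$ is \emph{not} available when $\xi_u(0)$ is merely in $\mathcal E$ (the linear piece $v$ never gains regularity). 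The standard cure is an iterated (transitivity) argument: use the first splitting to produce an $\mathcal E_1$-bounded absorbing/attracting set, then restart the splitting from $\mathcal E_1$ data to reach $\mathcal E_2$; you should make this two-step structure explicit. Second, your tail multiplier $\theta_R(x)(-\Delta_x)^{-1}\partial_t u$ mixes a local cut-off with the genuinely non-local operator $(-\Delta_x)^{-1}$; the commutators you must control are with $(-\Delta_x)^{-1}$ and with $\Delta_x^2$, not merely $[\theta_R,\Delta_x]$, and these require a more careful treatment (or an alternative localisation of the $\dot H^{-1}$-energy) than your sketch indicates. Your closing remark already anticipates that the $\mathcal E_2$ step is the crux; the iterated-splitting refinement is the missing ingredient there.
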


The aim of the present notes which can be considered as a continuation of the study  initiated in \cite{SZ.CHO}   is to establish the finiteness of fractal dimension of the attractor $\Cal A$  constructed in Theorem \ref{th.attr}. Thus, the main result of the notes is the following theorem.

\begin{theorem}\label{Th0.main} Let the assumptions of Theorem \ref{th.wp} hold. Then the global attractor $\Cal A\subset\E$ of the solution semigroup $S(t)$ associated with the hyperbolic CHO equation \eqref{eqCHO} has the finite fractal (box counting) dimension:
\begin{equation}\label{0.df}
\dim_f(\mathcal A,\E)<\infty.
\end{equation}
\end{theorem}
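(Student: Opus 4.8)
The standard route to finite-dimensionality of a global attractor is the *method of $\ell$-trajectories* or, more classically, the *volume-contraction / squeezing* approach based on a uniform differentiability of $S(t)$ on $\A$ together with a suitable smoothing (asymptotic compactness of the linearized flow). Given that the attractor $\A$ is already known (Theorem~\ref{th.attr}) to be bounded in the more regular space $\E_2=[\dot H^3\cap\dot H^{-1}]\times[\dot H^1\cap\dot H^{-1}]$, the natural strategy is to verify the abstract *smoothing property* for the difference of two solutions on the attractor: if $\xi_{u_1},\xi_{u_2}\in\A$ and $v=u_1-u_2$, then $\xi_v$ satisfies a linear nonautonomous hyperbolic CHO equation with potential $\ell(t)=\int_0^1 f'(su_1+(1-s)u_2)\,ds$, and one wants an estimate of the form $\|S(t)\xi_1-S(t)\xi_2\|_{\E_1}\le C e^{Kt}\|\xi_1-\xi_2\|_{\E}$ into a space $\E_1$ compactly embedded in $\E$, for some intermediate space $\E_1$ with $\E_2\hookrightarrow\E_1\hookrightarrow\E$. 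By the Chepyzhov--Vishik / Mañé-type theorem (or the abstract result of Efendiev--Miranville--Zelik on the ``method of trajectories''), such a smoothing estimate on the attractor, combined with the fact that $\A$ is compact, yields $\dim_f(\A,\E)<\infty$.

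The concrete steps I would carry out are: \textbf{(1)} Write the equation of variations $\Dt^2 v+\Dt v+\Dx(\Dx v-\ell(t)v)+\a v=0$ with $v|_{t=0}=\xi_1-\xi_2$, where $\ell(t)=\int_0^1 f'(su_1(t)+(1-s)u_2(t))\,ds$; by \eqref{f123}(3) and the $\E_2$-bound on the attractor together with the Strichartz regularity $u_i\in L^4(C_b)$, the potential $\ell$ lies in appropriate mixed Lebesgue spaces, enough to make the linearized problem well-posed in $\E$ and to control its energy. \textbf{(2)} Establish a standard energy estimate for $v$ in $\E$, giving the Lipschitz (in fact $C^1$) dependence $\|S(t)\xi_1-S(t)\xi_2\|_\E\le C e^{Kt}\|\xi_1-\xi_2\|_\E$ for $\xi_i\in\A$; this also identifies $S(t)$ as uniformly quasidifferentiable on $\A$ with differential $L(t,\xi_0)=\xi_v$. \textbf{(3)} Prove the key \emph{parabolic-type smoothing} for the linear equation: decompose $v=v_1+v_2$ where $v_1$ solves the linear \emph{homogeneous} hyperbolic CHO equation with zero data except the initial data (and decays exponentially in $\E$ by the dissipative estimate applied to the linear flow), while $v_2$ solves the equation with the term $\Dx(\ell(t)v)$ as a right-hand side and zero initial data; using the regularity of $\ell$ on the attractor and a bootstrap via the energy and Strichartz estimates from \cite{SZ.CHO}, show $\xi_{v_2}(t)$ is bounded in a space $\E_1$ compactly embedded in $\E$, with norm controlled by $\sup_{s\le t}\|\xi_v(s)\|_\E$. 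This is the genuine substitute for the usual smoothing property of parabolic equations and is what makes the squeezing work despite the hyperbolicity. \textbf{(4)} Invoke the abstract theorem: uniform quasidifferentiability of $S(t)$ on the compact invariant set $\A$ plus the smoothing (``$(\E,\E_1,\E)$''-smoothing with $\E_1\Subset\E$) of the differentials implies $\dim_f(\A,\E)<\infty$; equivalently, one checks that for some fixed $t_*>0$ the map $S(t_*)$ satisfies the discrete squeezing/Ladyzhenskaya condition via the Kolmogorov $\e$-entropy of the unit ball of $\E_1$ in $\E$.

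\textbf{Main obstacle.} The hard part is step \textbf{(3)}: proving the smoothing estimate for the equation of variations. Because the equation is hyperbolic, there is no instantaneous gain of regularity from the linear semigroup alone, so the gain must come entirely from the lower-order (compact) perturbation $\Dx(\ell(t)v)$ together with the extra regularity of the attractor. The difficulty is compounded by the sub-quintic (critical) growth of $f$: the potential $\ell(t)$ behaves like $|u|^{3-\kappa}$, which is only barely subcritical, so controlling $\Dx(\ell(t)v)$ in a space that feeds a bootstrap requires carefully combining the $\dot H^3$-bound on $u$ on the attractor with the Strichartz norm $u\in L^4(C_b)$ and interpolation, exactly as in the well-posedness proof of \cite{SZ.CHO}, but now for the \emph{linearized} equation and uniformly in the pair of trajectories. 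One must also be careful that the exponential factor $e^{Kt}$ from step~(2) does not destroy the estimate; this is handled in the usual way by working with a fixed time step $t_*$ and iterating, absorbing the constant into the entropy count. Once this uniform smoothing of the differentials is in place, the finite-dimensionality is a routine application of the abstract volume-contraction / squeezing machinery.
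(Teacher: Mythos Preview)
Your overall framework---quasidifferentiability on the attractor plus volume contraction/squeezing---is right, and step~(2) matches the paper. But step~(3) has a real gap: the perturbation $\Dx(\ell(t)v)$ is \emph{not} a compact perturbation in the energy space $\E=[\dot H^1\cap\dot H^{-1}]\times\dot H^{-1}$, and your splitting $v=v_1+v_2$ cannot produce any smoothing for $v_2$. To run the $\E$-energy estimate for $v_2$ you need the forcing in $\dot H^{-1}$, i.e.\ $\|\Dx(\ell v)\|_{\dot H^{-1}}=\|\nabla(\ell v)\|_{L^2}$; but $\nabla(\ell v)=\ell\,\nabla v+(\nabla\ell)v$, and the first term is controlled only by $\|f'(u)\|_{L^\infty}\|v\|_{\dot H^1}$---the full energy norm of $v$, with a coefficient that is not small on the attractor. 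Running the estimate in a stronger space $\E_1$ requires $\ell v\in\dot H^{1+\sigma}$, which again forces $v\in\dot H^{1+\sigma}$, so the bootstrap never starts. No amount of attractor regularity for $u$ or Strichartz input helps, because the obstruction sits in the $v$-factor. (Contrast the damped wave equation in $H^1_0\times L^2$: there the linearized forcing lies in $L^2$ and is controlled by $\|v\|_{L^2}$, which \emph{is} compact in a bounded domain.) This is exactly the phenomenon the paper isolates: in the energy identity for the variational equation the cross term $(f'(u)w,\Dt w)$ cannot be bounded by any norm compact in $\E$, even for bounded $f'$ and even in a bounded domain.

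The paper's remedy is therefore not a smoothing decomposition at all, but the Liouville/volume-contraction method with a \emph{time-dependent metric} on $\E$, following Ghidaglia's trick for the damped Schr\"odinger equation. One augments the energy norm by $(f'(u(t))w,w)$; differentiating this in $t$ produces $2(f'(u)w,\Dt w)+(f''(u)\Dt u,w^2)$, and the first piece cancels the bad cross term exactly, leaving only $(f''(u)\Dt u,w^2)$, which \emph{can} be controlled by compact quantities. Because the domain is $\R^3$, a spatial cut-off $\psi_R$ is also built into the metric, and a compensating term $L\|(-\Dx+1)^{-1/2}(\psi_R w)\|_{L^2}^2$ is added so that the new norm stays uniformly equivalent to $\|\cdot\|_\E$ (using $f'(0)\ge0$ and the uniform smallness of $u$ and $\Dt u$ outside large balls, which follows from compactness of $\Cal A$). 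After this the quadratic form in the Liouville formula is bounded above by $-\gamma\|\xi_w\|_\E^2+(K\xi_w,\xi_w)_\E$ with a single \emph{fixed} compact operator $K$ (compactness coming from the cut-off, not from a Sobolev gain), and the abstract Theorem~\ref{Th1.main} yields $\omega_d(S'(T,\xi_0))<1$ for $d$ and $T$ large, uniformly in $\xi_0\in\Cal A$. The missing idea in your plan is precisely this time-dependent-metric cancellation; without it, step~(3) does not close.
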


To prove this theorem we utilize the classical volume contraction method, see \cite{TemamDS} (see also \cite{Ab2,BV1,EZ01} and references therein for the applications of this method to the case of unbounded domains). However, in contrast to the cases considered there, in our case it looks difficult/impossible to estimate the volume contraction factor using the Liouville formula in the initial metric of the space $\E$. So, similarly to the case of damped driven Schr\"odinger equation considered in \cite{G.88}, we have to use the Liouville formula in $\mathcal E$ with the properly chosen time-dependent metric which becomes more complicated (in comparison with \cite{G.88}) since we need to overcome extra difficulties related with the fact that the underlying domain $\Omega=\R^3$ is unbounded, see Section \ref{s2} for more details.
\par
The paper is organized as follows.
\par
The Liouville formula for the expansion factors for $d$-dimensional volumes is reminded in Section~\ref{s1}. We pay  a special attention to the case where the metric in the underlying space is time-dependent which is crucial for  proving our main result.
\par
The volume contraction theorem is stated in Section \ref{s2}. We also remind some necessary definitions there and verify that the solution semigroup associated with the hyperbolic CHO equation is uniformly quasidifferentiable on the attractor.
\par
Finally, the proof of the main Theorem \ref{Th0.main} is given in Section \ref{s3}.

\section{Volume contraction factors, traces and Liouville's formula in the spaces with time-dependent metric}\label{s1}

For the convenience of the reader, we briefly recall in this section the key facts from the multi-linear algebra which will be used in the next section for the proof of the main result, see e.g., \cite{G.88,TemamDS} for more detailed exposition. We start with reminding the construction of  the $d$th exterior power of a Hilbert space $\E$.
\begin{definition}\label{Def1.ext}
Let $\E$ be a separable  Hilbert space and let $\varphi_1,\cdots,\varphi_d\in\E$. A wedge product $\varphi_1\wedge\cdots\wedge\varphi_n$ is a $d$-linear anti-symmetric form on $\E$ defined by the following expression:
\begin{equation}\label{1.wedge}
\varphi_1\wedge\cdots\wedge\varphi_d(\psi_1,\cdots,\psi_d):=\det\((\varphi_i,\psi_j)_{i,j=1}^d\),\ \ \psi_1,\cdots,\psi_d\in\E.
\end{equation}
A $d$-linear form on $\E$ which is a wedge product of  $d$ vectors of $\E$ is called decomposable.
Let us denote by $\widetilde \Lambda^d\E$ the space of $d$-linear antisymmetric forms which can be presented as {\it finite} linear combination of decomposable functionals. For two decomposable forms $\varphi_1\wedge\cdots\wedge\varphi_d$ and $\psi_1\wedge\cdots\wedge\psi_d$, their inner product is defined as follows:
\begin{equation}\label{1.product}
(\varphi_1\wedge\cdots\wedge\varphi_d,\psi_1\wedge\cdots\wedge\psi_d):=\det\((\varphi_i,\psi_j)_{i,j=1}^d\)
\end{equation}
and being extended by linearity to $\widetilde\Lambda^d\E$ it defines an inner product on the space  $\widetilde\Lambda^d\E$, see \cite{TemamDS} for the details. Finally, the completion of $\widetilde\Lambda^d\E$ with respect to this norm is called $d$th exterior power of the space $\E$ and is denoted by $\Lambda^d\E$.
\end{definition}
\begin{remark}\label{Rem1.vol} We remind that the representation of a decomposable form $\varphi_1\wedge\cdots\varphi_d$ as a wedge product of $d$ vectors of $\E$ is not unique. Moreover, using the Gram orthogonalization procedure, it is easy to show that there exists an {\it orthogonal} system of vectors $\bar\varphi_1,\cdots,\bar\varphi_d\in\E$ such that
\begin{equation}\label{1.ort}
\varphi_1\wedge\cdots\wedge\varphi_d=\bar\varphi_1\wedge\cdots\wedge\bar\varphi_d,\ \ \|\bar\varphi_i\|_{\E}\leq\|\varphi_i\|_{\E},\ \ i=1,\cdots,d.
\end{equation}
Remind also that the norm $\|\varphi_1\wedge\cdots\wedge\varphi_d\|_{\Lambda^d\E}$ can be interpreted as the $d$-dimensional volume of the parallelepiped generated by the vectors $\varphi_1,\cdots,\varphi_d\in\E$ and, in particular,
\begin{equation}\label{1.cont}
\|\varphi_1\wedge\cdots\wedge\varphi_d\|_{\Lambda^d\E}\le \|\varphi_1\|_{\E}\cdots\|\varphi_d\|_{\E}.
\end{equation}
Furthermore, if $\{e_i\}_{i=1}^\infty$ is an orthonormal basis in $\E$ then any $d$-linear antisymmetric form $\xi$ on $\E$ has the form
$$
\xi=\sum_{i_1<i_2<\cdots<i_d} a_{i_1,\cdots,i_d}e_{i_1}\wedge\cdots\wedge e_{i_d},\ \ a_{i_1,\cdots,i_d}=\xi(e_1,\cdots,e_d)\in\R
$$
and
$$
\|\xi\|_{\Lambda^d\E}^2=\sum_{i_1<i_2<\cdots<i_d}a_{i_1,\cdots,i_d}^2.
$$
Then, as not difficult to see, any $\xi\in\Lambda^d\E$ is a $d$-linear continuous form on $\E$, so $\Lambda^d\E$ is a subset of $d$-linear continuous antisymmetric forms on $\E$. This subset is proper if $d>1$ and $\dim\E=\infty$. For instance, if $d=2$, the space of $2$-linear antisymmetric forms on $\E$ is naturally identified (via $\xi_A(\psi_1,\psi_2)\mapsto (A\psi_1,\psi_2)$) with the space of linear continuous antisymmetric operators on $\E$ and $\Lambda^2\E$ will be the space of antisymmetric Hilbert-Schmidt operators.
\end{remark}
We are now ready to define the $d$th exterior power of a linear continuous operator $L\in\mathcal L(\E,\E)$ which controls the change of $d$-dimensional volumes under the action of this operator.
\begin{definition} Let $L\in\Cal L(\E,\E)$ be a linear continuous operator on $\E$. The linear operator $\Lambda^dL$ acts on the space $\Lambda^d\E$ by the following expression:
\begin{equation}\label{1.Lext}
(\Lambda^dL)\xi(\psi_1,\cdots,\psi_d):=\xi(L^*\psi_1,\cdots,L^*\psi_d),\ \ \xi\in\Lambda^d\E,\ \ \psi_1,\cdots,\psi_d\in\E,
\end{equation}
where $L^*\in\mathcal L(\E,\E)$ is the adjoint operator to $L$. In particular, if $\xi=\varphi_1\wedge\cdots\wedge\varphi_d$ is
decomposable then
\begin{equation}
\Lambda^d L(\varphi_1\wedge\cdots\wedge\varphi_d)=(L\varphi_1)\wedge\cdots\wedge(L\varphi_d).
\end{equation}
Note also that as follows from \eqref{1.Lext}
\begin{equation}\label{1.mult}
\Lambda^d(L_1\circ L_2)=\Lambda^dL_1\circ\Lambda^d L_2
\end{equation}
for any two linear operators $L_1,L_2\in\mathcal L(\E,\E)$.
\end{definition}
The following result is proved, e.g., in \cite{TemamDS}.
\begin{proposition} Let $L\in\mathcal L(\E,\E)$. Then, $\Lambda^dL\in\mathcal L(\Lambda^d\E,\Lambda^d\E)$ and the following formula holds:
\begin{equation}\label{1.eqnorm}
\|\Lambda^dL\|_{\mathcal L(\Lambda^d\E,\Lambda^d\E)}=\omega_d(L):=\sup_{\varphi_1\wedge\cdots\wedge\varphi_d\ne0}
\frac{\|(L\varphi_1)\wedge\cdots\wedge(L\varphi_d)\|_{\Lambda^d\E}}{\|\varphi_1\wedge\cdots\wedge\varphi_d\|_{\Lambda^d\E}}.
\end{equation}
\end{proposition}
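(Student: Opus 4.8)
The plan is to establish the two inequalities $\|\Lambda^dL\|_{\mathcal L(\Lambda^d\E,\Lambda^d\E)}\ge\omega_d(L)$ and $\|\Lambda^dL\|_{\mathcal L(\Lambda^d\E,\Lambda^d\E)}\le\omega_d(L)$ separately, the first being immediate and the second (together with the boundedness assertion) carrying all the content. The genuine difficulty is that the operator norm is by definition a supremum of $\|\Lambda^dL\xi\|/\|\xi\|$ over \emph{all} $\xi\in\Lambda^d\E$, whereas $\omega_d(L)$ records this ratio only on \emph{decomposable} forms; so the heart of the matter is to show that non-decomposable forms cannot produce a larger expansion factor. My strategy for this is to reduce, by density and finite support, to a finite-dimensional subspace, where the singular value decomposition makes everything explicit.

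First I would record boundedness and the trivial inequality. Given a decomposable form $\varphi_1\wedge\cdots\wedge\varphi_d$, I apply the orthogonalization \eqref{1.ort} to replace the $\varphi_i$ by an orthogonal system $\bar\varphi_i$ with $\|\bar\varphi_i\|_\E\le\|\varphi_i\|_\E$ and the same wedge. Since the $\bar\varphi_i$ are orthogonal, the Gram determinant \eqref{1.product} is diagonal and $\|\varphi_1\wedge\cdots\wedge\varphi_d\|_{\Lambda^d\E}=\prod_{i=1}^d\|\bar\varphi_i\|_\E$, while by \eqref{1.cont} one gets $\|(L\bar\varphi_1)\wedge\cdots\wedge(L\bar\varphi_d)\|_{\Lambda^d\E}\le\prod_{i=1}^d\|L\bar\varphi_i\|_\E\le\|L\|^d\prod_{i=1}^d\|\bar\varphi_i\|_\E$. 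Hence $\omega_d(L)\le\|L\|^d<\infty$. Moreover, every decomposable form is an element of $\Lambda^d\E$, so the supremum defining the operator norm majorizes the one defining $\omega_d(L)$, giving $\|\Lambda^dL\|\ge\omega_d(L)$ once $\Lambda^dL$ is known to be bounded.

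For the reverse inequality and boundedness I would reduce to finite dimensions. By density of $\widetilde\Lambda^d\E$ in $\Lambda^d\E$ it suffices to prove $\|\Lambda^dL\xi\|_{\Lambda^d\E}\le\omega_d(L)\|\xi\|_{\Lambda^d\E}$ for $\xi\in\widetilde\Lambda^d\E$. Writing $\xi$ as a finite sum of decomposable forms built from vectors $\varphi_{k,i}$, let $W$ be the finite-dimensional subspace spanned by all the $\varphi_{k,i}$ together with all the $L\varphi_{k,i}$, and let $P=P_W$ be the orthogonal projection onto $W$. Since the inner product \eqref{1.product} of vectors lying in $W$ is the same computed in $\E$ or in $W$, the inclusion $\Lambda^dW\hookrightarrow\Lambda^d\E$ is isometric, $\xi\in\Lambda^dW$, and $\Lambda^dL\xi=\Lambda^d\tilde L\,\xi$ with $\tilde L:=PL|_W\colon W\to W$ (here $P$ acts trivially on each $L\varphi_{k,i}\in W$). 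Furthermore $\Lambda^dP$ is idempotent and self-adjoint by \eqref{1.mult}, hence an orthogonal projection on $\Lambda^d\E$ of norm $\le1$; applied to $(Lw_1)\wedge\cdots\wedge(Lw_d)$ this yields $\|(PLw_1)\wedge\cdots\wedge(PLw_d)\|\le\|(Lw_1)\wedge\cdots\wedge(Lw_d)\|$ for all $w_i\in W$, and therefore $\omega_d(\tilde L)\le\omega_d(L)$, the supremum for $\tilde L$ being over decomposable forms with vectors in $W$.

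Finally I would carry out the finite-dimensional computation via singular values, which is the step where the passage to decomposable forms becomes transparent. Choosing, by the singular value decomposition of $\tilde L$, orthonormal bases $\{e_i\}$, $\{f_i\}$ of $W$ with $\tilde Le_i=\sigma_i f_i$ and $\sigma_1\ge\sigma_2\ge\cdots\ge0$, the family $\{e_{i_1}\wedge\cdots\wedge e_{i_d}\}_{i_1<\cdots<i_d}$ is an orthonormal basis of $\Lambda^dW$ that $\Lambda^d\tilde L$ maps to the orthogonal family $\sigma_{i_1}\cdots\sigma_{i_d}\,f_{i_1}\wedge\cdots\wedge f_{i_d}$; hence $\|\Lambda^d\tilde L\|_{\mathcal L(\Lambda^dW)}=\max_{i_1<\cdots<i_d}\sigma_{i_1}\cdots\sigma_{i_d}=\sigma_1\cdots\sigma_d$, and this value is already attained on the decomposable $e_1\wedge\cdots\wedge e_d$, so $\|\Lambda^d\tilde L\|_{\mathcal L(\Lambda^dW)}=\omega_d(\tilde L)$. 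Combining gives $\|\Lambda^dL\xi\|=\|\Lambda^d\tilde L\xi\|\le\omega_d(\tilde L)\|\xi\|\le\omega_d(L)\|\xi\|$; extending from the dense set $\widetilde\Lambda^d\E$ shows $\Lambda^dL\in\mathcal L(\Lambda^d\E,\Lambda^d\E)$ with $\|\Lambda^dL\|\le\omega_d(L)$, which together with the trivial inequality proves \eqref{1.eqnorm}. I expect the main obstacle to be exactly the reduction of the full operator norm to the decomposable supremum; the finite-dimensional confinement plus the singular value basis is what resolves it cleanly.
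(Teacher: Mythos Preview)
Your argument is correct. The paper itself does not give a proof of this proposition at all: it simply states, just before the proposition, that ``the following result is proved, e.g., in \cite{TemamDS}'' and moves on. So there is no paper-proof to compare against line by line.

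That said, your route---density of $\widetilde\Lambda^d\E$, confinement to a finite-dimensional subspace $W$ containing both the $\varphi_{k,i}$ and the $L\varphi_{k,i}$, the observation that $\Lambda^dP$ is an orthogonal projection so that $\omega_d(\tilde L)\le\omega_d(L)$, and then the singular value decomposition of $\tilde L$ to see that the operator norm of $\Lambda^d\tilde L$ is $\sigma_1\cdots\sigma_d$ and is attained on a decomposable form---is exactly the standard argument one finds in Temam's book. The only point worth flagging is cosmetic: you invoke \eqref{1.mult} to get idempotency and self-adjointness of $\Lambda^dP$, which is fine, but strictly speaking self-adjointness uses the identity $(\Lambda^dL)^*=\Lambda^d(L^*)$; this follows immediately from \eqref{1.product} and is harmless to assume here. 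Otherwise the proof is complete and self-contained.
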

\begin{remark}\label{Rem1.vol1} Interpreting the norm of the wedge product as the volume of the corresponding parallelepiped, we may
read \eqref{1.eqnorm} as
\begin{equation}
\omega_d(L)=\sup_{\Pi\subset\E}\frac{\operatorname{vol}_d(L\Pi)}{\operatorname{vol}_d(\Pi)},
\end{equation}
where the supremum is taken over all non-degenerate $d$-dimensional parallelepipeds in $\E$. Thus, geometrically $\omega_d(L)$ is the maximal expanding factor for $d$-dimensional volumes under the action of the operator $L$. Mention also the equivalent definitions of the volume contraction factor
\begin{multline}\label{1.vol-eq}
\omega_d(L)=\sup\bigg\{\|(L\varphi_1)\wedge\cdots\wedge(L\varphi_d)\|_{\Lambda^d\E}\,: \varphi_1,\cdots,\varphi_d\in\E,\ \|\varphi_1\|_{\E}=\cdots=\|\varphi_d\|_{\E}=1\bigg\}=\\=\sup\bigg\{\|(L\varphi_1)\wedge\cdots\wedge(L\varphi_d)\|_{\Lambda^d\E}\,: \varphi_1,\cdots,\varphi_d\in\E,\ (\varphi_i,\varphi_j)=\delta_{ij}\bigg\}.
\end{multline}
Indeed, the equivalence can be easily verified using the Gram orthogonalization procedure, see \cite{TemamDS}.
\end{remark}
The next simple corollary gives useful estimates for the volume contraction factor $\omega_d(L)$.
\begin{corollary} Let $L\in\mathcal L(\E,\E)$. Then
\begin{equation}\label{1.vol-norm}
\omega_d(L)\le\|L\|^d_{\mathcal L(\E,\E)}.
\end{equation}
Moreover, if $L_1,L_2\in\mathcal L(\E,\E)$ then
\begin{equation}\label{1.vol-alg}
\omega_d(L_1L_2)\le\omega_d(L_1)\omega_d(L_2).
\end{equation}
\end{corollary}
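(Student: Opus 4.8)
The plan is to derive both inequalities directly from the identifications and elementary bounds already assembled above, so that neither requires any genuine new computation. For \eqref{1.vol-norm} I would invoke the orthonormal form of the volume contraction factor recorded in the second line of \eqref{1.vol-eq}, namely
\[
\omega_d(L)=\sup\{\|(L\varphi_1)\wedge\cdots\wedge(L\varphi_d)\|_{\Lambda^d\E}:(\varphi_i,\varphi_j)=\delta_{ij}\}.
\]
For any orthonormal system $\varphi_1,\cdots,\varphi_d$, the elementary volume bound \eqref{1.cont} applied to the images $L\varphi_i$ gives
\[
\|(L\varphi_1)\wedge\cdots\wedge(L\varphi_d)\|_{\Lambda^d\E}\le\|L\varphi_1\|_\E\cdots\|L\varphi_d\|_\E\le\|L\|_{\mathcal L(\E,\E)}^d,
\]
since each factor satisfies $\|L\varphi_i\|_\E\le\|L\|_{\mathcal L(\E,\E)}\|\varphi_i\|_\E=\|L\|_{\mathcal L(\E,\E)}$. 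Taking the supremum over all orthonormal systems yields \eqref{1.vol-norm}. The reason for passing to orthonormal (rather than merely unit) vectors is precisely that this makes the denominator in \eqref{1.eqnorm} equal to one, so that \eqref{1.cont} applies cleanly to the numerator without any cancellation issues.

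For the multiplicative bound \eqref{1.vol-alg} I would instead use the operator-theoretic reading of $\omega_d$ furnished by the formula \eqref{1.eqnorm}, which identifies $\omega_d(L)$ with the operator norm $\|\Lambda^dL\|_{\mathcal L(\Lambda^d\E,\Lambda^d\E)}$. Combining this with the multiplicativity identity \eqref{1.mult}, namely $\Lambda^d(L_1\circ L_2)=\Lambda^dL_1\circ\Lambda^dL_2$, and with the submultiplicativity of the operator norm on $\mathcal L(\Lambda^d\E,\Lambda^d\E)$, I obtain
\[
\omega_d(L_1L_2)=\|\Lambda^d(L_1L_2)\|=\|\Lambda^dL_1\circ\Lambda^dL_2\|\le\|\Lambda^dL_1\|\,\|\Lambda^dL_2\|=\omega_d(L_1)\omega_d(L_2),
\]
which is exactly \eqref{1.vol-alg}.

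There is essentially no obstacle here: both assertions are immediate consequences of results already established in this section, and the only point deserving mild attention is the choice of normalization in the first part, where working with an orthonormal system rather than arbitrary unit vectors is what permits the direct application of \eqref{1.cont}. I expect the whole argument to occupy only a few lines.
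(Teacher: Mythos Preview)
Your proof is correct and follows essentially the same route as the paper: the paper derives \eqref{1.vol-norm} from \eqref{1.vol-eq} together with \eqref{1.cont}, and \eqref{1.vol-alg} from \eqref{1.mult} together with \eqref{1.eqnorm}, exactly as you do. One minor remark: the first line of \eqref{1.vol-eq} (unit vectors) would already suffice for the first part, since \eqref{1.cont} applied to $L\varphi_1,\ldots,L\varphi_d$ immediately gives $\|L\|^d$ when $\|\varphi_i\|=1$; orthonormality is not actually needed here.
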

Indeed, estimate \eqref{1.vol-norm} follows from \eqref{1.vol-eq} and \eqref{1.cont} and estimate \eqref{1.vol-alg} is an immediate corollary of the identities \eqref{1.mult} and \eqref{1.eqnorm}.
\par
At the next step, we introduce one more extension of the operator $L\in\mathcal L(\E,\E)$ to the exterior power $\Lambda^d\E$ which is responsible for the "trace part" of the Liouville formula.
\begin{definition}\label{Def1.trace} Let $L\in\mathcal L(\E,\E)$. For any $\xi\in\Lambda^d\E$, we define the $d$-linear antisymmetric functional $L_d\xi$ as follows:
\begin{equation}\label{1.trL}
(L_d\xi)(\psi_1,\cdots,\psi_d):=\xi(L^*\psi_1,\psi_2,\cdots,\psi_d)+\xi(\psi_1,L^*\psi_2,\cdots,\psi_d)+
\cdots+\xi(\psi_1,\cdots,\psi_{d-1},L^*\psi_d).
\end{equation}
In particular, if $\xi=\varphi_1\wedge\cdots\wedge\varphi_d$ is decomposable then
\begin{equation}\label{1.trLd}
L_d(\varphi_1\wedge\cdots\wedge\varphi_d)=(L\varphi_1)\wedge\varphi_2\wedge\cdots\wedge\varphi_d+\cdots+
\varphi_1\wedge\cdots\wedge\varphi_{d-1}\wedge(L\varphi_d).
\end{equation}
It is not difficult to show that $L_d\in\Cal L(\Lambda^d\E,\Lambda^d\E)$ and
\begin{equation}
\|L_d\|_{\mathcal L(\Lambda^d\E,\Lambda^d\E)}\le d\|L\|_{\mathcal L(\E,\E)}.
\end{equation}
However, we will not use this estimate in the sequel, so we leave its proof to the reader (see \cite{TemamDS} for its proof in the self-adjoint case).
\end{definition}
The next formula plays the crucial role in the derivation of the Liouville formula.
\begin{proposition} Let $L\in\mathcal L(\E,\E)$ and $\varphi_1,\cdots,\varphi_d\in\E$. Then
\begin{equation}\label{1.tr-form}
(L_d(\varphi_1\wedge\cdots\wedge\varphi_d),\varphi_1\wedge\cdots\wedge\varphi_d)_{\Lambda^d\E}=\operatorname{Tr}(Q\circ L\circ Q)\|\varphi_1\wedge\cdots\wedge\varphi_d\|^2_{\Lambda^d\E},
\end{equation}
where $Q=Q(\varphi_1,\cdots,\varphi_d)$ is the orthoprojector to the $d$-dimensional subspace in $\E$ spanned by the vectors $\varphi_1,\cdots,\varphi_d$ and $\operatorname{Tr}(Q\circ L\circ Q)$ is a usual trace of the $d$-dimensional operator (matrix) $Q\circ L\circ Q$ which can be computed as follows:
\begin{equation}\label{1.tr-fin}
\operatorname{Tr}(Q\circ L\circ Q)=\sum_{i=1}^d(L\psi_i,\psi_i),
\end{equation}
where $\{\psi_1,\cdots,\psi_d\}$ is any orthonormal system in $Q\E$.
\end{proposition}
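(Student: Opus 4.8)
The plan is to reduce the identity to the case in which $\varphi_1,\dots,\varphi_d$ is an \emph{orthonormal} basis of the subspace $V:=Q\E$ and then to evaluate the left-hand side of \eqref{1.tr-form} directly from the definitions \eqref{1.wedge}, \eqref{1.product} and \eqref{1.trLd}.

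First I would dispose of the degenerate case: if $\varphi_1,\dots,\varphi_d$ are linearly dependent then $\varphi_1\wedge\cdots\wedge\varphi_d=0$ as an element of $\Lambda^d\E$, so both sides of \eqref{1.tr-form} vanish. Hence assume $\dim V=d$ and fix an orthonormal basis $\psi_1,\dots,\psi_d$ of $V$. Writing $\varphi_i=\sum_j M_{ij}\psi_j$ with $M=(M_{ij})$ the invertible change-of-basis matrix, the multilinearity of the determinant in \eqref{1.wedge} gives $\varphi_1\wedge\cdots\wedge\varphi_d=(\det M)\,\psi_1\wedge\cdots\wedge\psi_d$ in $\Lambda^d\E$, and then \eqref{1.product} yields $\|\varphi_1\wedge\cdots\wedge\varphi_d\|^2_{\Lambda^d\E}=(\det M)^2$. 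Since $L_d$ is linear on $\Lambda^d\E$ and the inner product is bilinear, the left-hand side of \eqref{1.tr-form} is multiplied by $(\det M)^2$ when each $\varphi_i$ is replaced by $\psi_i$; the same factor appears on the right-hand side through $\|\varphi_1\wedge\cdots\wedge\varphi_d\|^2_{\Lambda^d\E}$, while $Q$, and therefore $\operatorname{Tr}(Q\circ L\circ Q)$, depends only on $V$. Consequently it suffices to prove \eqref{1.tr-form} for the orthonormal family $\psi_1,\dots,\psi_d$, for which the right-hand side is just $\operatorname{Tr}(Q\circ L\circ Q)$ (as $\|\psi_1\wedge\cdots\wedge\psi_d\|^2_{\Lambda^d\E}=\det(\delta_{ij})=1$).

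Next I would compute the left-hand side for the orthonormal family. By \eqref{1.trLd},
\[
L_d(\psi_1\wedge\cdots\wedge\psi_d)=\sum_{k=1}^d\psi_1\wedge\cdots\wedge(L\psi_k)\wedge\cdots\wedge\psi_d ,
\]
with $L\psi_k$ sitting in the $k$-th slot, so by bilinearity and \eqref{1.product} the inner product of this with $\psi_1\wedge\cdots\wedge\psi_d$ equals $\sum_{k=1}^d\det G^{(k)}$, where $G^{(k)}$ is the matrix obtained from the $d\times d$ identity by replacing its $k$-th row with $\big((L\psi_k,\psi_1),\dots,(L\psi_k,\psi_d)\big)$ (the remaining rows being $(\psi_i,\psi_j)=\delta_{ij}$). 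Expanding $\det G^{(k)}$ along that row — equivalently, expanding $L\psi_k=\sum_i(L\psi_k,\psi_i)\psi_i+\dots$ in an orthonormal basis of $\E$ and discarding every term that repeats a wedge factor or lands outside $V$ — gives $\det G^{(k)}=(L\psi_k,\psi_k)$. Hence the left-hand side of \eqref{1.tr-form} equals $\sum_{k=1}^d(L\psi_k,\psi_k)$, which is precisely the asserted identity once this sum is identified with $\operatorname{Tr}(Q\circ L\circ Q)$.

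For that last identification, and for \eqref{1.tr-fin}, note that $Q$ is the self-adjoint orthoprojector onto $V$ with $Q\psi_k=\psi_k$, so $(Q\circ L\circ Q\,\psi_k,\psi_k)=(L\psi_k,\psi_k)$; since $Q\circ L\circ Q$ annihilates $V^\perp$ and maps $V$ into $V$, it acts effectively as a $d$-dimensional operator whose trace is $\sum_{k=1}^d(L\psi_k,\psi_k)$ in any orthonormal basis $\{\psi_1,\dots,\psi_d\}$ of $Q\E$, the value being basis-independent by the standard property of the finite-dimensional trace. I do not expect a genuine obstacle in this argument: the only mildly delicate point is the multilinear-algebra bookkeeping in the computation $\det G^{(k)}=(L\psi_k,\psi_k)$ and the verification that both sides of \eqref{1.tr-form} transform homogeneously of degree two in $\det M$ under the change of generators, both of which are routine.
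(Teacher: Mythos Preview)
Your proof is correct and follows essentially the same strategy as the paper: reduce to the case of an orthogonal (the paper) or orthonormal (you) generating family of the span, where the identity is a direct computation, and observe that the reduction is legitimate because the wedge product, and hence both sides of \eqref{1.tr-form}, depend on the generators only through the span up to a common scalar factor. The paper merely sketches this by invoking \eqref{1.ort} and citing Temam, whereas you spell out the $(\det M)^2$ homogeneity and the determinant expansion $\det G^{(k)}=(L\psi_k,\psi_k)$ explicitly; there is no substantive difference in approach.
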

Indeed, \eqref{1.tr-form} is obvious if $\varphi_1,\cdots,\varphi_d\in\E$ are orthogonal and the general case can be reduced to this particular one using \eqref{1.ort}, see \cite{TemamDS} for the details.
\par
We are now ready to state the key Liouville formula first for the case of time independent metric. To this end, we assume that we are given the following linear evolution equation in $\E$:
\begin{equation}\label{1.lin-eq}
\frac d{dt}\varphi(t)=L(t)\varphi(t),\ \ \varphi\big|_{t=0}=\varphi_0
\end{equation}
for some $L\in L^\infty(0,T;\mathcal L(\E,\E))$. Then, the following result holds.
\begin{proposition}\label{Prop1.Li-const} Let $\varphi_1(t),\cdots,\varphi_d(t)$ be the solutions of problem \eqref{1.lin-eq}. Then the following identity holds:
\begin{equation}\label{1.liu}
\frac12\frac d{dt}\|\varphi_1(t)\wedge\cdots\wedge\varphi_d(t)\|^2_{\Lambda^d\E}=\operatorname{Tr}(Q(t)\circ L(t)\circ Q(t))\|\varphi_1(t)\wedge\cdots\wedge\varphi_d(t)\|^2_{\Lambda^d\E},
\end{equation}
where $Q(t)$ is the orthoprojector to the $d$-dimensional space spanned by the vectors $\varphi_1(t),\cdots,\varphi_d(t)\in\E$.
\end{proposition}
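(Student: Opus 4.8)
The plan is to differentiate the wedge product $W(t):=\varphi_1(t)\wedge\cdots\wedge\varphi_d(t)$ directly inside the Hilbert space $\Lambda^d\E$, observe that it solves a linear equation governed by the operator $L_d(t)$ from Definition \ref{Def1.trace}, and then read off the right-hand side of \eqref{1.liu} from the trace identity \eqref{1.tr-form}.

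First I would record the regularity of the solutions. Since $L\in L^\infty(0,T;\mathcal L(\E,\E))$, each $\varphi_i$ is absolutely continuous from $[0,T]$ into $\E$ and satisfies $\dot\varphi_i(t)=L(t)\varphi_i(t)$ for a.e.\ $t$. The map $(\psi_1,\dots,\psi_d)\mapsto\psi_1\wedge\cdots\wedge\psi_d$ is continuous and $d$-linear from $\E^d$ to $\Lambda^d\E$ by \eqref{1.cont}, so $W(\cdot)$ is absolutely continuous with values in $\Lambda^d\E$ and the Leibniz rule for multilinear maps yields, for a.e.\ $t$,
\[
\frac{d}{dt}W(t)=\sum_{i=1}^d\varphi_1(t)\wedge\cdots\wedge\dot\varphi_i(t)\wedge\cdots\wedge\varphi_d(t)=\sum_{i=1}^d\varphi_1(t)\wedge\cdots\wedge(L(t)\varphi_i(t))\wedge\cdots\wedge\varphi_d(t).
\]
Comparing with \eqref{1.trLd}, the last sum is exactly $L_d(t)\,W(t)$, so $W$ solves $\dot W=L_d(t)W$ in $\Lambda^d\E$.

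Then I would simply compute, for a.e.\ $t$,
\[
\frac12\frac d{dt}\|W(t)\|^2_{\Lambda^d\E}=\(\frac{d}{dt}W(t),W(t)\)_{\Lambda^d\E}=\(L_d(t)W(t),W(t)\)_{\Lambda^d\E}
\]
and apply the trace identity \eqref{1.tr-form} with $L=L(t)$ and $\varphi_i=\varphi_i(t)$ (note that $W(t)$ is decomposable), which rewrites the last quantity as $\operatorname{Tr}(Q(t)\circ L(t)\circ Q(t))\,\|W(t)\|^2_{\Lambda^d\E}$, i.e.\ exactly \eqref{1.liu}. Here $Q(t)$ is the orthoprojector onto $\operatorname{span}\{\varphi_1(t),\dots,\varphi_d(t)\}$; if these vectors are linearly dependent at some $t_0$, then $W(t_0)=0$, hence $W\equiv0$ by uniqueness for the linear equation $\dot W=L_d(t)W$, and both sides of \eqref{1.liu} vanish, so there is nothing to prove. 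By invertibility of the solution operator of \eqref{1.lin-eq}, linear independence of the $\varphi_i(t)$ either holds for all $t$ or for no $t$.

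The argument is essentially routine; the only point needing a little care is the validity of the Leibniz rule for the $\Lambda^d\E$-valued, merely absolutely continuous function $W(t)$ (rather than a $C^1$ one), but this is a direct consequence of the continuity and multilinearity of the wedge map combined with the absolute continuity of each $\varphi_i$. I do not anticipate any genuine obstacle here.
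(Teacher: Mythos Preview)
Your proof is correct and follows essentially the same route as the paper's: both reduce to the trace identity \eqref{1.tr-form}. The only organizational difference is that the paper differentiates the Gram determinant $\det\((\varphi_i,\varphi_j)\)$ entry-by-entry, passing through the symmetric part $L^{sym}(t)$ via the parallelogram law and then observing $\operatorname{Tr}(Q\circ L^{sym}\circ Q)=\operatorname{Tr}(Q\circ L\circ Q)$, whereas you differentiate $W(t)$ directly in $\Lambda^d\E$ by the Leibniz rule and apply \eqref{1.tr-form} to $L(t)$ itself, which is slightly more direct and avoids the $L^{sym}$ detour.
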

\begin{proof} Indeed, multiplying equation \eqref{1.lin-eq} by $\varphi(t)$ in $\E$, we have
\begin{equation}\label{1.energy}
\frac12\frac d{dt}\|\varphi(t)\|^2_{\E}=(L(t)\varphi(t),\varphi(t))_{\E}=(L^{sym}(t)\varphi(t),\varphi(t))_{\E}
\end{equation}
and, therefore, due to the parallelogram law,
\begin{equation}\label{1.en1}
\frac12\frac d{dt}(\varphi_i(t),\varphi_j(t))_{\E}=(L^{sym}(t)\varphi_i(t),\varphi_j(t))_{\E},\ \ i,j=1,\cdots,d,
\end{equation}
where $L^{sym}(t)=\frac12(L(t)+L^*(t))$ is the symmetric part of the operator $L$. Differentiating now the determinant in the LHS of \eqref{1.liu} and using \eqref{1.en1} and \eqref{1.tr-form}, we have
\begin{multline}
\frac12\frac d{dt}\|\varphi_1(t)\wedge\cdots\wedge\varphi_d(t)\|^2_{\Lambda^d\E}=
(L^{sym}_d(t)(\varphi_1(t)\wedge\cdots\wedge\varphi_d(t)),\varphi_1(t)\wedge\cdots\wedge\varphi_d(t))_{\Lambda^d\E}=\\=
\operatorname{Tr}(Q(t)\circ L^{sym}(t)\circ Q(t))\|\varphi_1(t)\wedge\cdots\wedge\varphi_d(t)\|_{\Lambda^d\E}^2.
\end{multline}
Since, obviously,
$$
\operatorname{Tr}(Q(t)\circ L^{sym}(t)\circ Q(t))=\operatorname{Tr}(Q(t)\circ L(t)\circ Q(t)),
$$
then the proposition is proved.
\end{proof}
The trace on the RHS of the Liouville formula still depends on the vectors $\varphi_i(t)$ which are usually not known explicitly, so for its estimating it is convenient to introduce one more object.

\begin{definition}\label{Def1.trace1} Let $L\in\mathcal L(\E,\E)$. Then its $d$-dimensional trace is defined as the following number
\begin{equation}\label{1.dtrace}
\operatorname{Tr}_d(L):=\sup\bigg\{\sum_{i=1}^d(L\psi_i,\psi_i)\,: \ \psi_i\in\E,\ (\psi_i,\psi_j)=\delta_{ij}\bigg\}.
\end{equation}
Then, obviously,
\begin{equation}\label{1.good}
\operatorname{Tr}(Q\circ L\circ Q)\le \operatorname{Tr}_d(L).
\end{equation}
Moreover,
\par
1) if $L_1,L_2\in\mathcal L(\E,\E)$ then
\begin{equation}\label{1.tr-sum}
\operatorname{Tr}_d(L_1+L_2)\le\operatorname{Tr}_d(L_1)+\operatorname{Tr}_d(L_2).
\end{equation}
\par 2) If $L_1,L_2\in\mathcal L(\E,\E)$. Then
\begin{equation}
(L_1\varphi,\varphi)\le (L_2\varphi,\varphi)\ \ \forall\varphi\in\E\ \ \Rightarrow \ \ \operatorname{Tr}_d(L_1)\le\operatorname{Tr}_d(L_2).
\end{equation}
\end{definition}
\begin{corollary}\label{Co1.vol} Let $U(t,0):\E\to\E$ be the solution operator of equation \eqref{1.lin-eq} defined via the expression $U(t,0)\varphi(0)=\varphi(t)$. Then the volume expansion factor $\omega_d(U(t,0))$ possesses the following estimate:
\begin{equation}\label{1.vol-exp}
\omega_d(U(t,0))\le e^{\int_0^t\operatorname{Tr}_d(L(s))\,ds}.
\end{equation}
\end{corollary}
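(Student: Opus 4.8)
The plan is to combine the Liouville formula from Proposition~\ref{Prop1.Li-const} with the elementary estimate \eqref{1.good} in a Gr\"onwall-type argument. Fix an initial time $t=0$ and pick $d$ arbitrary initial vectors $\varphi_1(0),\cdots,\varphi_d(0)\in\E$, and let $\varphi_1(t),\cdots,\varphi_d(t)$ be the corresponding solutions of the linear equation \eqref{1.lin-eq}, so that $\varphi_i(t)=U(t,0)\varphi_i(0)$. Set $V(t):=\|\varphi_1(t)\wedge\cdots\wedge\varphi_d(t)\|^2_{\Lambda^d\E}$. Then Proposition~\ref{Prop1.Li-const} gives the scalar ODE $\tfrac12 V'(t)=\operatorname{Tr}(Q(t)\circ L(t)\circ Q(t))\,V(t)$, where $Q(t)$ is the orthoprojector onto the span of the $\varphi_i(t)$.

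Next I would bound the coefficient: by \eqref{1.good} we have $\operatorname{Tr}(Q(t)\circ L(t)\circ Q(t))\le \operatorname{Tr}_d(L(t))$ pointwise in $t$, so $V'(t)\le 2\operatorname{Tr}_d(L(t))\,V(t)$. Integrating this differential inequality (Gr\"onwall) from $0$ to $t$ yields
\begin{equation*}
V(t)\le V(0)\,e^{2\int_0^t\operatorname{Tr}_d(L(s))\,ds},
\end{equation*}
that is,
\begin{equation*}
\|(U(t,0)\varphi_1(0))\wedge\cdots\wedge(U(t,0)\varphi_d(0))\|_{\Lambda^d\E}\le e^{\int_0^t\operatorname{Tr}_d(L(s))\,ds}\,\|\varphi_1(0)\wedge\cdots\wedge\varphi_d(0)\|_{\Lambda^d\E}.
\end{equation*}

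Finally, since the initial vectors $\varphi_1(0),\cdots,\varphi_d(0)$ were arbitrary, I take the supremum over all non-degenerate wedge products $\varphi_1(0)\wedge\cdots\wedge\varphi_d(0)\ne0$ and invoke the characterization \eqref{1.eqnorm} (equivalently \eqref{1.vol-eq}) of $\omega_d(U(t,0))$ as exactly this supremum of the ratio of $\Lambda^d\E$-norms. This gives $\omega_d(U(t,0))\le e^{\int_0^t\operatorname{Tr}_d(L(s))\,ds}$, which is \eqref{1.vol-exp}. The argument is essentially routine; the only mild technical point worth a word is that one should check the map $t\mapsto \operatorname{Tr}_d(L(t))$ is integrable, which follows from $\operatorname{Tr}_d(L(t))\le d\|L(t)\|_{\mathcal L(\E,\E)}$ together with the assumption $L\in L^\infty(0,T;\mathcal L(\E,\E))$, and that $V$ is absolutely continuous so that the Gr\"onwall step is legitimate — both of which are immediate from the earlier setup. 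There is no serious obstacle here; the substance was already absorbed into Proposition~\ref{Prop1.Li-const} and Definition~\ref{Def1.trace1}.
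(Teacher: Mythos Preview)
Your proof is correct and follows essentially the same approach as the paper: integrate the Liouville identity \eqref{1.liu} using the bound \eqref{1.good} (i.e., a Gr\"onwall step), then pass to the supremum via \eqref{1.eqnorm}. The paper states this in two lines; you have simply unpacked the same steps with a bit more detail on integrability and absolute continuity.
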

Indeed, integrating \eqref{1.liu} in time and using \eqref{1.good}, we have
$$
\|\varphi_1(t)\wedge\cdots\wedge\varphi_d(t)\|_{\Lambda^d\E}\le e^{\int^t_0\operatorname{Tr}_d(L(s))\,ds}\|\varphi_1(0)\wedge\cdots\wedge\varphi_d(0)\|_{\Lambda^d\E}
$$
and estimate \eqref{1.vol-exp} is now an immediate corollary of \eqref{1.eqnorm}.
\par
The next proposition is very useful for estimating the $d$-dimensional traces
\begin{proposition}\label{Prop1.min-max} Let $L\in\mathcal L(\E,\E)$ be self-adjoint and let
\begin{equation}\label{1.min-max}
\mu_k(L):=\inf_{F\subset\E,\,\dim F=k-1}\sup_{\varphi\in F^\perp,\,\varphi\ne0}\frac{(L\varphi,\varphi)_\E}{\|\varphi\|^2_{\E}},
\end{equation}
where the infinum is taken over all $(k-1)$-dimensional planes in $\E$ and $F^\perp$ stands for the orthogonal complement in $\E$. Then sequence $\mu_k(L)$ is monotone decreasing and, consequently, the limit
\begin{equation}\label{1.inf}
\mu_{\infty}(L)=\lim_{k\to\infty}\mu_k(L)
\end{equation}
exists. This limit coincides with the upper bound of the continuous spectrum of the operator $L$. Moreover,
\par
1) Any $\mu_k(L)>\mu_\infty(L)$ is an eigenvalue of the operator $L$ and, in particular, $\mu_\infty(L)=0$ if the operator $L$ is compact.
\par
2) The following formula for the $d$-dimensional traces hold:
\begin{equation}\label{1.tr-mu}
 \operatorname{Tr}_d(L)=\sum_{k=1}^d\mu_k(L).
\end{equation}
\end{proposition}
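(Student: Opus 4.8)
The plan is to deduce the whole statement from the spectral theorem for bounded self-adjoint operators together with the classical Courant--Fischer and Ky Fan principles (see e.g. \cite{TemamDS}). Monotonicity is elementary: every $(k-1)$-dimensional plane $F\subset\E$ is contained in a $k$-dimensional plane $F'$ with $(F')^\perp\subset F^\perp$, so the inner supremum in \eqref{1.min-max} can only decrease on passing from $F$ to $F'$, and taking the infimum over the larger family of $k$-planes yields $\mu_{k+1}(L)\le\mu_k(L)$. Since the Rayleigh quotient $(L\varphi,\varphi)_\E/\|\varphi\|_\E^2$ is bounded below by $-\|L\|_{\mathcal L(\E,\E)}$, the decreasing sequence $\mu_k(L)$ is bounded below, hence converges; this defines $\mu_\infty(L)$.

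The core of the argument is the characterization
\[
\mu_k(L)=\inf\{\lambda\in\R:\ \operatorname{rank}E((\lambda,+\infty))\le k-1\},
\]
where $L=\int\lambda\,dE_\lambda$ is the spectral resolution of $L$. The inequality ``$\le$'' is obtained by testing \eqref{1.min-max} with any $(k-1)$-plane containing $\operatorname{Ran}E((\lambda,+\infty))$, on whose orthogonal complement one has $(L\varphi,\varphi)_\E\le\lambda\|\varphi\|_\E^2$; the inequality ``$\ge$'' follows because, when $\operatorname{rank}E((\lambda,+\infty))\ge k$, this range meets $F^\perp$ nontrivially for every $(k-1)$-plane $F$, and there $(L\varphi,\varphi)_\E\ge\lambda\|\varphi\|_\E^2$. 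Combined with the decomposition $\sigma(L)=\sigma_{disc}(L)\sqcup\sigma_{ess}(L)$, this shows that $\mu_k(L)$ is the $k$-th largest eigenvalue of $L$ above $m:=\sup\sigma_{ess}(L)$ counted with multiplicity, as long as there are at least $k$ such eigenvalues (and in that case it is an eigenvalue), and $\mu_k(L)=m$ otherwise. Since only finitely many eigenvalues exceed any level $>m$ and every accumulation point of $\sigma_{disc}(L)$ lies in $\sigma_{ess}(L)$, it follows that $\mu_\infty(L)=m=\sup\sigma_{ess}(L)$ (the ``upper bound of the continuous spectrum''), that $\mu_\infty(L)=0$ when $L$ is compact (then $\sigma_{ess}(L)=\{0\}$), and that ``$\mu_k>\mu_\infty\Rightarrow$ eigenvalue'' is precisely the first alternative above.

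The trace formula \eqref{1.tr-mu} is then Ky Fan's principle. For an orthonormal system $\psi_1,\dots,\psi_d$ with span $V$ and orthoprojector $Q$, the quantity $\sum_{i=1}^d(L\psi_i,\psi_i)_\E$ is the trace of the $d$-dimensional compression $A:=QLQ|_V$, i.e. the sum $\nu_1+\cdots+\nu_d$ of its eigenvalues. Since $(A\varphi,\varphi)_\E=(L\varphi,\varphi)_\E$ for $\varphi\in V$, the span of the top $j$ eigenvectors of $A$ is a $j$-dimensional subspace of $\E$ on which $L-(\nu_j-\e)I$ is positive for each $\e>0$; such a subspace injects into $\operatorname{Ran}E((\nu_j-\e,+\infty))$, so $\operatorname{rank}E((\nu_j-\e,+\infty))\ge j$ and the characterization gives $\nu_j\le\mu_j(L)$. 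Summing over $j$ and taking the supremum over all orthonormal $d$-tuples yields $\operatorname{Tr}_d(L)\le\sum_{k=1}^d\mu_k(L)$. For the reverse inequality one exhibits near-optimal systems: orthonormal eigenvectors for those $\mu_j$ exceeding $\mu_\infty(L)$, completed by orthonormal vectors chosen inside $\operatorname{Ran}E((\mu_\infty(L)-\e,\mu_\infty(L)+\e))$, which is infinite-dimensional precisely because $\mu_\infty(L)\in\sigma_{ess}(L)$; on such vectors $(L\varphi,\varphi)_\E\ge(\mu_\infty(L)-\e)\|\varphi\|_\E^2$, and letting $\e\to0$ gives $\operatorname{Tr}_d(L)\ge\sum_{k=1}^d\mu_k(L)$.

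The only genuinely non-elementary point, and the step I expect to be the \emph{main obstacle}, is the treatment of the essential (continuous) spectrum: when $\mu_k(L)=\mu_\infty(L)$ there need be no eigenvector realizing this value, so the supremum defining $\operatorname{Tr}_d(L)$ is in general not attained, and the near-extremal orthonormal families must be built out of spectral subspaces --- equivalently, singular Weyl sequences --- associated to a small neighbourhood of $\sup\sigma_{ess}(L)$. Everything else reduces to the spectral theorem and the finite-dimensional min-max recalled above, together with the multilinear-algebra facts of this section.
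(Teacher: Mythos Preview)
Your proposal is correct and follows the standard min-max/Ky Fan route; the paper itself does not give a proof but simply writes ``The proof of this proposition is based on the min-max principle, see \cite{TemamDS} for more details,'' so your argument is precisely a fleshing-out of what the paper points to. One small imprecision: the $j$-dimensional subspace on which $L-(\nu_j-\e)I$ is positive need not literally \emph{inject into} $\operatorname{Ran}E((\nu_j-\e,+\infty))$; rather, it cannot intersect $\operatorname{Ran}E((-\infty,\nu_j-\e])$ nontrivially, which forces $\operatorname{rank}E((\nu_j-\e,+\infty))\ge j$ by a dimension count --- the conclusion you draw is unaffected.
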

The proof of this proposition is based on the min-max principle, see \cite{TemamDS} for more details.

\begin{remark}\label{Rem1.unbounded} For simplicity, we consider above only the case where the operator $L(t)$ in equation \eqref{1.lin-eq} is bounded although it is usually {\it unbounded} in applications. However, the Liouville formula \eqref{1.liu} is actually finite-dimensional and we only need the solutions of \eqref{1.lin-eq} to be well-defined and satisfy the
 energy identity \eqref{1.energy} and, for the validity of Proposition \ref{Prop1.min-max}, we need the operator $L$ to be bounded from above. Note also that in our application to the case of hyperbolic CHO the energy equality will be automatically satisfied and the symmetric part $L^{sym}(t)$ will be a bounded operator.
\end{remark}

Our next task is to extend the Liouville formula to the case of time dependent metrics. To this end, we  assume that we are given a family $\|\cdot\|_{\E(t)}$ of time-dependent Hilbert norms in $\E$ such that
\begin{equation}\label{1.equiv}
c^{-1}\|\varphi\|_{\E}^2\le \|\varphi\|_{\E(t)}^2\le c\|\varphi\|_{\E}^2,\ \ t\in\R,\ \ c>0,
\end{equation}
where the constant $c$ is independent of $t$. Moreover, we assume that the solutions of equation \eqref{1.lin-eq} satisfy the energy equality of the form
\begin{equation}\label{1.en-time}
\frac12\frac {d}{dt}\|\varphi(t)\|^2_{\E(t)}=(M(t)\varphi(t),\varphi(t))_{\E}
\end{equation}
for some operators $M(t)$. Then, the following result holds.

\begin{proposition}\label{Prop1.lio-time} Let the solutions of equation \eqref{1.lin-eq} be well-posed and satisfy the energy identity \eqref{1.en-time}. Then, the following analogue of formula \eqref{1.liu} holds:
\begin{equation}\label{1.lio-time}
\frac12 \frac d{dt}\|\varphi_1(t)\wedge\cdots\wedge\varphi_d(t)\|^2_{\Lambda^d\E(t)}=\operatorname{Tr}(Q(t)\circ M_{\E(t)}(t)\circ Q(t))\|\varphi_1(t)\wedge\cdots\wedge\varphi_d(t)\|^2_{\Lambda^d\E(t)},
\end{equation}
where $Q(t)$ is the orthoprojector in $\E(t)$ to the space spanned by the vectors $\varphi_1(t),\cdots,\varphi_d(t)$ and $M_{\E(t)}(t)$ are such that
\begin{equation}\label{1.qforms}
(M(t)\varphi,\varphi)_{\E}=(M_{\E(t)}(t)\varphi,\varphi)_{\E(t)}
\end{equation}
(which exist due to the Riesz representation theorem).
\end{proposition}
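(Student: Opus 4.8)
The plan is to re-run the proof of Proposition~\ref{Prop1.Li-const} essentially verbatim, but with every computation carried out, at each fixed time $t$, inside the Hilbert space $\E(t):=(\E,(\cdot,\cdot)_{\E(t)})$ instead of the fixed space $\E$. What makes this work is the following observation: although $\frac d{dt}(\varphi_i(t),\varphi_j(t))_{\E(t)}$ receives a contribution from the time-dependence of the metric itself, the energy identity \eqref{1.en-time} has already absorbed that contribution --- together with the one produced by equation \eqref{1.lin-eq} --- into the single operator $M(t)$, so that once \eqref{1.en-time} has been polarized we are, at each frozen time, in exactly the situation of the time-independent case. As a preliminary remark, by the uniform equivalence \eqref{1.equiv} the space $\widetilde\Lambda^d\E$ and its completion $\Lambda^d\E$ do not depend on $t$ and the norms $\|\cdot\|_{\Lambda^d\E(t)}$ are mutually equivalent, so the whole multilinear-algebra apparatus of Section~\ref{s1} --- in particular the trace formula \eqref{1.tr-form} and the operator $L_d$ of Definition~\ref{Def1.trace} --- can be applied with $\E$ replaced by $\E(t)$ for each fixed $t$.

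First I would polarize \eqref{1.en-time}. Since equation \eqref{1.lin-eq} is linear, for any two of the given solutions the sum $\varphi_i+\varphi_j$ and the difference $\varphi_i-\varphi_j$ are again solutions, so \eqref{1.en-time} applies to each of them; subtracting the two resulting identities and dividing by $4$ yields
\begin{equation*}
\frac12\frac d{dt}(\varphi_i(t),\varphi_j(t))_{\E(t)}=\frac12\big[(M(t)\varphi_i(t),\varphi_j(t))_{\E}+(M(t)\varphi_j(t),\varphi_i(t))_{\E}\big],\qquad i,j=1,\dots,d,
\end{equation*}
the left-hand side being well defined because $t\mapsto\|\varphi_i(t)\pm\varphi_j(t)\|^2_{\E(t)}$ is differentiable. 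The Riesz identity \eqref{1.qforms} equates the quadratic forms $\varphi\mapsto(M(t)\varphi,\varphi)_{\E}$ and $\varphi\mapsto(M_{\E(t)}(t)\varphi,\varphi)_{\E(t)}$, hence (polarizing again) their associated symmetric bilinear forms coincide, so the right-hand side above equals $(\widetilde M(t)\varphi_i(t),\varphi_j(t))_{\E(t)}$, where $\widetilde M(t)$ denotes the self-adjoint part of $M_{\E(t)}(t)$ with respect to the inner product of $\E(t)$.

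Then I would differentiate the volume. By the definition of the inner product on $\Lambda^d\E(t)$,
\begin{equation*}
\|\varphi_1(t)\wedge\cdots\wedge\varphi_d(t)\|^2_{\Lambda^d\E(t)}=\det\big((\varphi_i(t),\varphi_j(t))_{\E(t)}\big)_{i,j=1}^d,
\end{equation*}
which is a polynomial in the Gram entries, so its $t$-derivative is computed from the previous step alone (no separate term for the motion of the $\Lambda^d$-metric is needed). Differentiating this determinant by the Leibniz rule and inserting Step~1, exactly as in the passage from \eqref{1.en1} to \eqref{1.liu} carried out now in $\E(t)$, gives $(\widetilde M_d(t)(\varphi_1(t)\wedge\cdots\wedge\varphi_d(t)),\varphi_1(t)\wedge\cdots\wedge\varphi_d(t))_{\Lambda^d\E(t)}$ for the left-hand side of \eqref{1.lio-time}; then the trace formula \eqref{1.tr-form}, applied in $\E(t)$, turns this into $\operatorname{Tr}(Q(t)\circ\widetilde M(t)\circ Q(t))\,\|\varphi_1(t)\wedge\cdots\wedge\varphi_d(t)\|^2_{\Lambda^d\E(t)}$, where $Q(t)$ is the orthoprojector in $\E(t)$ onto the span of $\varphi_1(t),\dots,\varphi_d(t)$. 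Finally, since $\widetilde M(t)-M_{\E(t)}(t)$ is skew-adjoint in $\E(t)$ while $Q(t)$ is self-adjoint there, the operator $Q(t)(\widetilde M(t)-M_{\E(t)}(t))Q(t)$ is skew-adjoint on the finite-dimensional range of $Q(t)$ and hence traceless, so $\operatorname{Tr}(Q(t)\circ\widetilde M(t)\circ Q(t))=\operatorname{Tr}(Q(t)\circ M_{\E(t)}(t)\circ Q(t))$; this proves \eqref{1.lio-time} and, incidentally, shows that its right-hand side does not depend on the particular Riesz representative $M_{\E(t)}(t)$, which \eqref{1.qforms} pins down only through its quadratic form. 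The degenerate case, in which the $\varphi_i(t)$ become linearly dependent at some $t$, is harmless, both sides of \eqref{1.lio-time} then vanishing.

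The main obstacle is not a single deep step but the bookkeeping in the polarization: one has to check that polarization is compatible with the moving inner product and --- most importantly --- that it is precisely the $\E(t)$-symmetric part of the Riesz representative $M_{\E(t)}(t)$, rather than $M(t)$ itself or some $\E$-symmetrization, that ends up inside the trace. Once that is settled, the remaining computation is, step for step, the one already performed in Proposition~\ref{Prop1.Li-const}.
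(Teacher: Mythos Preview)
Your proposal is correct and is exactly the approach the paper intends: the paper's own proof of Proposition~\ref{Prop1.lio-time} consists of the single sentence ``the proof of this statement repeats word by word the proof of Proposition~\ref{Prop1.Li-const}'', and you have spelled out precisely those steps, correctly noting that the energy identity \eqref{1.en-time} already absorbs the metric's time-dependence so that the polarization and the trace formula \eqref{1.tr-form} can be applied in $\E(t)$ at each frozen~$t$. Your care in tracking that it is the $\E(t)$-symmetric part of $M_{\E(t)}(t)$ that appears, and that this has the same trace as $M_{\E(t)}(t)$ itself, mirrors the paper's handling of $L^{sym}$ versus $L$ in the time-independent case.
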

Indeed, the proof of this statement repeats word by word the proof of Proposition \ref{Prop1.Li-const}, see also \cite{G.88}.
\par
The next two corollaries connect the volume contraction factors and traces in the spaces $\E$ and $\E(t)$.
\begin{corollary}\label{Cor1.det} Let the Hilbert norms $\|\cdot\|_{\E(t)}$ satisfy \eqref{1.equiv} and $L\in\mathcal L(\E,\E)$. Then
\begin{equation}\label{1.vol-eq1}
c^{-d}\omega_d(L,\E)\le\omega_d(L,\E(t))\le c^d\omega_d(L,\E),
\end{equation}
where $\omega_d(L,\E)$ and $\omega_d(L,\E(t))$ are volume expanding factors of $L$ in the spaces $\E$ and $\E(t)$ respectively.
\end{corollary}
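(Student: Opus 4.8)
The plan is to prove \eqref{1.vol-eq1} directly from the definitions of the volume expanding factors together with the two-sided equivalence \eqref{1.equiv}, using the geometric interpretation from Remark~\ref{Rem1.vol1}. First I would recall that $\omega_d(L,\E(t))$ is, by \eqref{1.eqnorm}, the supremum of the ratio $\|(L\varphi_1)\wedge\cdots\wedge(L\varphi_d)\|_{\Lambda^d\E(t)}/\|\varphi_1\wedge\cdots\wedge\varphi_d\|_{\Lambda^d\E(t)}$ over all nonzero decomposable forms, and similarly for $\omega_d(L,\E)$. So the statement reduces to comparing the two norms $\|\cdot\|_{\Lambda^d\E(t)}$ and $\|\cdot\|_{\Lambda^d\E}$ on decomposable forms $\varphi_1\wedge\cdots\wedge\varphi_d$.

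The key step is therefore to establish the inequality
\begin{equation}\label{1.ext-equiv}
c^{-d}\|\xi\|^2_{\Lambda^d\E}\le \|\xi\|^2_{\Lambda^d\E(t)}\le c^d\|\xi\|^2_{\Lambda^d\E}
\end{equation}
for every decomposable $\xi=\varphi_1\wedge\cdots\wedge\varphi_d$. To see this I would use the orthogonalization from Remark~\ref{Rem1.vol}: working in the space $\E(t)$, pick an $\E(t)$-orthogonal system $\bar\varphi_1,\dots,\bar\varphi_d$ with $\varphi_1\wedge\cdots\wedge\varphi_d=\bar\varphi_1\wedge\cdots\wedge\bar\varphi_d$. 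Then $\|\xi\|_{\Lambda^d\E(t)}=\prod_i\|\bar\varphi_i\|_{\E(t)}$ since the Gram determinant is diagonal, while $\|\xi\|_{\Lambda^d\E}\le\prod_i\|\bar\varphi_i\|_{\E}$ by \eqref{1.cont}; combining with the pointwise equivalence $\|\bar\varphi_i\|_{\E}^2\le c\|\bar\varphi_i\|_{\E(t)}^2$ gives $\|\xi\|_{\Lambda^d\E}^2\le c^d\|\xi\|_{\Lambda^d\E(t)}^2$, which is the upper half of \eqref{1.ext-equiv}. Repeating the argument with the roles of $\E$ and $\E(t)$ interchanged (orthogonalize in $\E$ instead) yields the lower half. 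Substituting \eqref{1.ext-equiv} into the numerator and denominator of the ratio in \eqref{1.eqnorm} produces a factor $c^d$ in the numerator and a factor $c^{-d}$ in the denominator (or the reverse), so the ratio changes by at most $c^{2d}$ in each direction; one then takes the supremum to obtain \eqref{1.vol-eq1}. Alternatively, and perhaps more cleanly, one may observe that $\Lambda^d$ applied to the identity map $\mathrm{id}\colon\E(t)\to\E$ has operator norm $\le c^{d/2}$ (and its inverse likewise), so $\omega_d(L,\E(t))=\omega_d((\mathrm{id})^{-1}\circ L\circ\mathrm{id},\E)$ and \eqref{1.vol-alg} together with \eqref{1.vol-norm} applied to the two identity maps gives the claim; I would present whichever of the two is shorter.

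There is no real obstacle here: the only mildly delicate point is bookkeeping the exponent, making sure that passing through the exterior product introduces the power $d$ rather than $d/2$ or $2d$ — this is why the orthogonal representation \eqref{1.ort} is used, since it lets one write the $\Lambda^d$-norm of a decomposable form as a genuine product of $d$ individual $\E$-norms to which \eqref{1.equiv} applies termwise. Once the exponent is tracked correctly the estimate \eqref{1.vol-eq1} is immediate, and the corollary follows.
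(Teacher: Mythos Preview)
Your approach is correct in substance, and your ``alternative'' route is exactly what the paper does: it introduces the positive self-adjoint operator $U(t)=V(t)^{1/2}$ with $\|\varphi\|_{\E(t)}=\|U(t)\varphi\|_{\E}$, so that $\|U(t)\|_{\mathcal L(\E,\E)},\|U(t)^{-1}\|_{\mathcal L(\E,\E)}\le c^{1/2}$ by \eqref{1.equiv}, observes that $\omega_d(L,\E(t))=\omega_d(U(t)LU(t)^{-1},\E)$, and then applies \eqref{1.vol-alg} and \eqref{1.vol-norm} to the three factors. Introducing $U(t)$ explicitly is slightly cleaner than your identity-map formulation because it keeps everything inside $\mathcal L(\E,\E)$, where \eqref{1.vol-norm} and \eqref{1.vol-alg} are stated.

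One bookkeeping slip in your primary argument: your inequality \eqref{1.ext-equiv} is written for the \emph{squared} $\Lambda^d$-norms, while the ratio in \eqref{1.eqnorm} uses unsquared norms. So the numerator and denominator each pick up a factor of at most $c^{d/2}$, not $c^d$, and the ratio changes by at most $c^d$, not $c^{2d}$ --- which is precisely what \eqref{1.vol-eq1} asserts. You anticipated that the exponent is the delicate point; this is where it bites.
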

\begin{proof}
Indeed, by Riesz representation theorem, there exist positive self-adjoint operators $U(t)=V(t)^\frac{1}{2}$ such that
\begin{equation}\label{1.U}
\|\varphi\|^2_{\E(t)}=(V(t)\varphi,\varphi)_\E=\|U(t)\varphi\|^2_{\E}.
\end{equation}
Moreover, estimate \eqref{1.equiv} give that
$$
\|U(t)\|_{\mathcal L(\E,\,\E)}\le c^{1/2},\ \ \|U(t)^{-1}\|_{\mathcal L(\E,\,\E)}\le c^{1/2}.
$$
Moreover, as not difficult to show,
$$
\omega_d(L,\E(t))=\omega_d(U(t)LU(t)^{-1},\E)
$$
and, thanks to  \eqref{1.vol-norm} and \eqref{1.vol-alg}
$$
\omega_d(L,\E(t))\le\omega_d(U(t),\E)\omega_d(L,\E)\omega_d(U(t)^{-1},\E)\le \|U(t)\|^d_{\mathcal L(\E,\,\E)}\|U(t)^{-1}\|^d_{\mathcal L(\E,\,\E)}\omega_d(L,\E)\le c^d\omega_d(L,\E).
$$
The opposite inequality can be proved analogously and the corollary is proved.
\end{proof}
\begin{corollary}\label{Cor1.tr} Let the Hilbert norms $\|\cdot\|_{\E(t)}$ satisfy \eqref{1.equiv} and  the operators $M(t)$ and $M_{\E(t)}(t)$ be such that \eqref{1.qforms} is satisfied. Then
\begin{equation}\label{1.tr-time}
\operatorname{Tr}_d(M_{\E(t)}(t),\E(t))\le c\operatorname{Tr}_d(M(t),\E)
\end{equation}
if the quadratic form $(M(t)\varphi,\varphi)$ is positive definite. The constant $c$ on the RHS of \eqref{1.tr-time} should be replaced by $c^{-1}$ if this form is negative definite.
\end{corollary}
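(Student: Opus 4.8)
The plan is to move everything to the fixed space $\E$ by means of the positive self-adjoint operators $U(t)=V(t)^{1/2}$ from the proof of Corollary~\ref{Cor1.det}, for which $\|\varphi\|_{\E(t)}^2=\|U(t)\varphi\|_\E^2$ and $\|U(t)^{\pm1}\|_{\mathcal L(\E,\E)}\le c^{1/2}$. Since only the quadratic form $(M(t)\varphi,\varphi)_\E$ enters both the relation \eqref{1.qforms} and the definition \eqref{1.dtrace} of the $d$-dimensional trace, I would first replace $M(t)$ by its symmetric part (bounded in our applications, cf.\ Remark~\ref{Rem1.unbounded}), so as to assume $M(t)$ self-adjoint. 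Then, using \eqref{1.qforms} and the fact that a system $\{\psi_i\}_{i=1}^d$ is orthonormal in $\E(t)$ if and only if $\{U(t)\psi_i\}_{i=1}^d$ is orthonormal in $\E$, the change of variables $\psi_i=U(t)^{-1}\eta_i$ in \eqref{1.dtrace} should yield the exact identity $\operatorname{Tr}_d(M_{\E(t)}(t),\E(t))=\operatorname{Tr}_d\big(U(t)^{-1}M(t)U(t)^{-1},\E\big)$. This reduces the corollary to a purely $\E$-internal comparison of the $d$-dimensional trace of the conjugated operator $U^{-1}MU^{-1}$ with that of $M$.

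For that comparison I would invoke Proposition~\ref{Prop1.min-max}: both traces equal $\sum_{k=1}^d\mu_k(\cdot)$, so it suffices to compare the min-max numbers. Substituting $\zeta=U^{-1}\varphi$ in \eqref{1.min-max} for $\mu_k(U^{-1}MU^{-1})$ and using that $U$ is an isomorphism of $\E$ (so that $G:=UF$ ranges over all $(k-1)$-planes as $F$ does), one rewrites
\[
\mu_k(U^{-1}MU^{-1})=\inf_{\dim G=k-1}\ \sup_{\zeta\in G^\perp,\,\zeta\ne0}\frac{(M\zeta,\zeta)_\E}{\|U\zeta\|_\E^2}=\inf_{\dim G=k-1}\ \sup_{\zeta\in G^\perp,\,\zeta\ne0}\frac{(M\zeta,\zeta)_\E}{\|\zeta\|_{\E(t)}^2}.
\]
Now the equivalence \eqref{1.equiv} enters. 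If the form $(M\cdot,\cdot)_\E$ is non-negative, then $\|\zeta\|_{\E(t)}^2\ge c^{-1}\|\zeta\|_\E^2$ gives $(M\zeta,\zeta)_\E/\|\zeta\|_{\E(t)}^2\le c\,(M\zeta,\zeta)_\E/\|\zeta\|_\E^2$, hence $\mu_k(U^{-1}MU^{-1})\le c\,\mu_k(M)$ after taking the supremum over $\zeta\in G^\perp$ and the infimum over $G$; summing over $k=1,\dots,d$ yields \eqref{1.tr-time}. If instead $(M\cdot,\cdot)_\E$ is non-positive, then $\|\zeta\|_{\E(t)}^2\le c\|\zeta\|_\E^2$ together with $(M\zeta,\zeta)_\E\le0$ gives $(M\zeta,\zeta)_\E/\|\zeta\|_{\E(t)}^2\le c^{-1}(M\zeta,\zeta)_\E/\|\zeta\|_\E^2$, hence $\mu_k(U^{-1}MU^{-1})\le c^{-1}\mu_k(M)$ and, after summation, the variant of \eqref{1.tr-time} with $c$ replaced by $c^{-1}$.

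The one point that needs care — and the reason the definiteness hypothesis on $(M\cdot,\cdot)_\E$ cannot be dropped — is that there is no pointwise operator inequality relating $U^{-1}MU^{-1}$ to a multiple of $M$ in general (the operators $U$ and $M$ need not commute), so item~2) of Definition~\ref{Def1.trace1} is not directly applicable; it is exactly the sign of the form that lets one carry the constant $c^{\pm1}$ through the Rayleigh quotient above. As a self-contained alternative that avoids Proposition~\ref{Prop1.min-max}, one may take an $\E(t)$-orthonormal system $\{\psi_i\}$, expand $\psi_i=\sum_j a_{ij}e_j$ in an $\E$-orthonormal basis $\{e_j\}$ of $\operatorname{span}\{\psi_1,\dots,\psi_d\}$, note that the orthonormality constraint reads $AGA^T=I$ with $G_{jl}=(e_j,e_l)_{\E(t)}$ and $c^{-1}I\le G\le cI$, so $A^TA=G^{-1}$, and therefore $\sum_i(M\psi_i,\psi_i)_\E=\operatorname{Tr}(\widehat N\,G^{-1})$ with $\widehat N_{jl}=(Me_j,e_l)_\E$; the sign of $\widehat N$ combined with $c^{-1}I\le G^{-1}\le cI$ then gives $\sum_i(M\psi_i,\psi_i)_\E\le c^{\pm1}\operatorname{Tr}(\widehat N)\le c^{\pm1}\operatorname{Tr}_d(M,\E)$, and taking the supremum over $\{\psi_i\}$ finishes the proof. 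The only genuinely delicate bookkeeping, in either route, is keeping track of which of $c$ or $c^{-1}$ appears in the two cases.
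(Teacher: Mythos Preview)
Your main argument is correct and is essentially the paper's own proof: both reduce to comparing the min--max numbers $\mu_k$ via Proposition~\ref{Prop1.min-max}, using \eqref{1.qforms} to identify the numerators and \eqref{1.equiv} to pass between $\|\cdot\|_{\E(t)}$ and $\|\cdot\|_\E$ in the denominator, with the sign of the form dictating whether $c$ or $c^{-1}$ appears. Your detour through the isometry $U(t)$ and the identity $\operatorname{Tr}_d(M_{\E(t)}(t),\E(t))=\operatorname{Tr}_d(U(t)^{-1}M(t)U(t)^{-1},\E)$ is a harmless reformulation that, after the substitution $\zeta=U(t)^{-1}\varphi$, lands on exactly the Rayleigh quotient the paper writes down directly; if anything, your version is slightly more careful about which inner product defines $F^\perp$.

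Your self-contained alternative via the Gram matrix $G$ and the identity $\sum_i(M\psi_i,\psi_i)_\E=\operatorname{Tr}(\widehat N\,G^{-1})$ with $A^TA=G^{-1}$ is a genuinely different and pleasant route that bypasses Proposition~\ref{Prop1.min-max} altogether; it trades the spectral min--max machinery for elementary linear algebra on a $d\times d$ block, at the cost of requiring the same sign hypothesis on $\widehat N$ to push $c^{-1}I\le G^{-1}\le cI$ through the trace.
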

\begin{proof} Without loss of generality we may assume that $M(t)$ and $M_{\E(t)}(t)$ are self-adjoint in $\E$ and $\E(t)$ respectively. Assume also that the quadratic form is positive (non-negative). The case of negative forms can be considered analogously. According to Proposition \ref{Prop1.min-max} it is enough to compare the corresponding eigenvalues. Using \eqref{1.equiv}, we have
\begin{multline}
\mu_k(M_{\E(t)}(t),\E(t))=\inf_{dim F=k-1}\sup_{\varphi\in F^\perp,\varphi\neq 0}\frac{(M_{\E(t)}(t)\varphi,\varphi)_{\E(t)}}{\|\varphi\|^2_{\E(t)}}=\\=\inf_{dim F=k-1}\sup_{\varphi\in F^\perp,\varphi\neq 0}\frac{(M(t)\varphi,\varphi)_{\E}}{\|\varphi\|^2_{\E(t)}}\le c\inf_{dim F=k-1}\sup_{\varphi\in F^\perp,\varphi\neq 0}\frac{(M(t)\varphi,\varphi)_{\E}}{\|\varphi\|^2_{\E}}
=c\mu_k(M(t),\E)
\end{multline}
and formula \eqref{1.tr-mu} finishes the derivation of \eqref{1.tr-time}. Thus, the corollary is proved.
\end{proof}
We are now ready to state the main result of the section which will be used for the proof of the finite-dimensionality of the attractor for the hyperbolic CHO equation.
\begin{theorem}\label{Th1.main} Let equation \eqref{1.lin-eq} be well-posed in $\E$ and its solutions $\varphi(t)$ possess the energy identity \eqref{1.en-time} where the Hilbert norms $\|\cdot\|_{\E(t)}$ satisfy \eqref{1.equiv} and the operators $M(t)$ can be estimated from above by  a sum
\begin{equation}\label{1.split}
(M(t)\varphi,\varphi)_{\E}\le(\mathcal C(t)\varphi,\varphi)_{\E}+(K(t)\varphi,\varphi)_{\E}, \ \ \varphi\in\E,
\end{equation}
where $\mathcal C(t)$ are negatively definite:
\begin{equation}\label{1.neg}
(\mathcal C(t)\varphi,\varphi)_{\E}\le -\alpha\|\varphi\|^2_{\E},\  \varphi\in\E
\end{equation}
with the constant $\alpha>0$ which is independent on $t$ and operators $K(t)$ are positive (non-negative) definite and possess the estimate
\begin{equation}\label{1.comp}
(K(t)\varphi,\varphi)_\E\le (K\varphi,\varphi)_{\E},
\end{equation}
where the operator $K\in\mathcal L(\E,\E)$ is compact. Then the volume expanding factor $\omega_d(U(t,0))$ of the solution operator $U(t,0)$ of problem \eqref{1.lin-eq} in $\E$ possesses the following estimate:
\begin{equation}\label{1.small}
\omega_d(U(t,0),\E)\le e^{d\ln c+(c C_K-\frac\alpha {2c} d)t},
\end{equation}
where the constant $C_K$ depends only on the operator $K$. In particular, if $d\in\Bbb N$ is chosen in such way that
\begin{equation}\label{1.dim}
c C_K-\frac\alpha {2c} d<0
\end{equation}
then
\begin{equation}\label{1.fin}
\omega_d(U(t,0),\E)\le \frac12,\ \ t\ge t_0,
\end{equation}
where $t_0$ depends only on $c$, $\alpha$, $C_K$ and $d$.
\end{theorem}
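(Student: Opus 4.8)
The plan is to apply the time-dependent Liouville formula of Proposition~\ref{Prop1.lio-time} to the $d$ solutions $\varphi_i(t)=U(t,0)\varphi_i(0)$ of \eqref{1.lin-eq}, to bound the trace appearing on its right-hand side by the single structural quantity $cC_K-\alpha d/c$ by means of \eqref{1.split}--\eqref{1.comp}, to integrate in time, and finally to pass from the time-dependent metric $\E(t)$ back to the fixed metric $\E$ on the exterior power $\Lambda^d\E$. Concretely, well-posedness of \eqref{1.lin-eq} together with the energy identity \eqref{1.en-time} puts us exactly in the hypotheses of Proposition~\ref{Prop1.lio-time}, so, writing $\Lambda(t):=\varphi_1(t)\wedge\cdots\wedge\varphi_d(t)$,
$$\tfrac12\tfrac{d}{dt}\|\Lambda(t)\|^2_{\Lambda^d\E(t)}=\operatorname{Tr}\bigl(Q(t)\circ M_{\E(t)}(t)\circ Q(t)\bigr)\,\|\Lambda(t)\|^2_{\Lambda^d\E(t)},$$
and by \eqref{1.good} the scalar coefficient is at most $\operatorname{Tr}_d(M_{\E(t)}(t),\E(t))$. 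Everything thus reduces to estimating this $d$-dimensional trace.

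This is the heart of the proof. Rewriting \eqref{1.split} in the metric $\E(t)$ (Riesz representatives of a sum of forms add up) gives $(M_{\E(t)}(t)\varphi,\varphi)_{\E(t)}\le\bigl((\mathcal C_{\E(t)}(t)+K_{\E(t)}(t))\varphi,\varphi\bigr)_{\E(t)}$, so the monotonicity and subadditivity of $\operatorname{Tr}_d$ from Definition~\ref{Def1.trace1} yield $\operatorname{Tr}_d(M_{\E(t)}(t),\E(t))\le\operatorname{Tr}_d(\mathcal C_{\E(t)}(t),\E(t))+\operatorname{Tr}_d(K_{\E(t)}(t),\E(t))$. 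For the dissipative piece, $(\mathcal C(t)\varphi,\varphi)_\E\le-\alpha\|\varphi\|^2_\E=(-\alpha\,\mathrm{Id}\,\varphi,\varphi)_\E$ is negative definite, so monotonicity gives $\operatorname{Tr}_d(\mathcal C(t),\E)\le\operatorname{Tr}_d(-\alpha\,\mathrm{Id},\E)=-\alpha d$ and the negative-definite case of Corollary~\ref{Cor1.tr} gives $\operatorname{Tr}_d(\mathcal C_{\E(t)}(t),\E(t))\le c^{-1}\operatorname{Tr}_d(\mathcal C(t),\E)\le-\alpha d/c$. For the compact piece, $K(t)$ is non-negative and $(K(t)\varphi,\varphi)_\E\le(K\varphi,\varphi)_\E$, so monotonicity together with the positive-definite case of Corollary~\ref{Cor1.tr} gives $\operatorname{Tr}_d(K_{\E(t)}(t),\E(t))\le c\,\operatorname{Tr}_d(K(t),\E)\le c\,\operatorname{Tr}_d(K,\E)=:cC_K$; here $C_K$ is finite (indeed $\operatorname{Tr}_d(K,\E)\le d\|K\|_{\mathcal L(\E,\E)}$) and, since $K$ is compact with eigenvalues $\mu_k(K)\to0$, $\operatorname{Tr}_d(K,\E)=\sum_{k=1}^d\mu_k(K)$ grows sublinearly in $d$ (one may take $C_K=\operatorname{Tr}(K)$ uniformly if $K$ is trace class). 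Combining the two estimates, $\operatorname{Tr}_d(M_{\E(t)}(t),\E(t))\le cC_K-\alpha d/c$.

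It remains to assemble the estimate. Substituting the trace bound into the Liouville identity and applying Gronwall (legitimate since $\|\Lambda(t)\|^2_{\Lambda^d\E(t)}\ge0$) gives $\|\Lambda(t)\|^2_{\Lambda^d\E(t)}\le e^{2t(cC_K-\alpha d/c)}\|\Lambda(0)\|^2_{\Lambda^d\E(0)}$. To convert the exterior-power norms I argue exactly as in the proof of Corollary~\ref{Cor1.det}: writing $\|\varphi\|^2_{\E(t)}=\|U(t)\varphi\|^2_\E$ with $\|U(t)^{\pm1}\|_{\mathcal L(\E,\E)}\le c^{1/2}$ and using $\|\Lambda^dU(t)\|\le\|U(t)\|^d$, one obtains $c^{-d}\|\cdot\|^2_{\Lambda^d\E}\le\|\cdot\|^2_{\Lambda^d\E(t)}\le c^d\|\cdot\|^2_{\Lambda^d\E}$, which costs a factor $c^{2d}$ and hence $\|\Lambda(t)\|_{\Lambda^d\E}\le c^d e^{t(cC_K-\alpha d/c)}\|\Lambda(0)\|_{\Lambda^d\E}$. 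Taking the supremum over normalized families via \eqref{1.vol-eq} then yields $\omega_d(U(t,0),\E)\le e^{d\ln c+(cC_K-\alpha d/c)t}\le e^{d\ln c+(cC_K-\alpha d/(2c))t}$, which is \eqref{1.small} (the factor $1/2$ is a harmless weakening, since $-\alpha d/c\le-\alpha d/(2c)$). Finally, once $d$ is chosen with $\gamma:=\alpha d/(2c)-cC_K>0$ — possible because $C_K/d\to0$ — the bound reads $\omega_d(U(t,0),\E)\le e^{d\ln c-\gamma t}$, which is $\le1/2$ for all $t\ge t_0:=(d\ln c+\ln2)/\gamma$; since $c\ge1$ by \eqref{1.equiv}, $t_0$ is a positive number depending only on $c,\alpha,C_K,d$, and \eqref{1.fin} follows.

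I expect the only genuinely delicate point to be the trace bookkeeping under the change of metric: splitting $M(t)$ according to \eqref{1.split}, applying Corollary~\ref{Cor1.tr} with the constant $c^{-1}$ on the negative-definite summand and $c$ on the non-negative one, and keeping track of the $c^d$ factors generated by passing to the $d$-th exterior power. The Liouville step, the Gronwall integration, and the concluding choice of $d$ and $t_0$ are routine applications of the material already set up in Section~\ref{s1}.
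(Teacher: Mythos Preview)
Your proof is correct and follows the same route as the paper's: Liouville formula in the time-dependent metric, split the trace using subadditivity and Corollary~\ref{Cor1.tr} with the factor $c^{-1}$ on the negative part and $c$ on the non-negative part, then convert back to the fixed metric at the cost of $c^d$. The only slip is in your handling of $C_K$ and the origin of the $1/2$: you set $C_K:=\operatorname{Tr}_d(K,\E)$, which depends on $d$, whereas the statement asks for a $C_K$ depending only on $K$. The paper's device is to use compactness of $K$ (so $\mu_k(K)\to0$, i.e.\ $\mu_\infty(K)=0$) to write $\operatorname{Tr}_d(K,\E)=\sum_{k=1}^d\mu_k(K)\le C_K+\frac{\alpha}{2c^2}\,d$ with a $d$-independent $C_K$; the extra $\frac{\alpha}{2c^2}d$ then combines with $-\alpha d/c$ to give the $-\frac{\alpha}{2c}d$ in \eqref{1.small}. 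In other words, the factor $1/2$ is not a ``harmless weakening'' of your sharper bound but precisely the payment for making $C_K$ independent of $d$. Your derivation of \eqref{1.fin} via $\operatorname{Tr}_d(K,\E)/d\to0$ is nonetheless correct and equivalent in substance.
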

\begin{proof} According to the Liouville formula \eqref{1.lio-time} analogously to \eqref{1.vol-exp}, we have
\begin{equation}
\omega_d(U(t,0),\E(t))\le e^{\int_0^t\operatorname{Tr}_d(M_{\E(s)}(s),\,\E(s))\,ds}.
\end{equation}
Furthermore, thanks to \eqref{1.vol-eq1} and \eqref{1.tr-sum},
\begin{equation}\label{1.nice}
\omega_d(U(t,0),\E)\le c^d\omega_d(U(t,0),\E(t))\le c^d e^{\int_0^t\operatorname{Tr}_d(\mathcal C(s)_{\E(s)},\E(s))+\operatorname{Tr}_d(K(s)_{\E(s)},\E(s))\,ds}.
\end{equation}
Since $\mathcal C(t)$ is negative and $K(t)$ is positive, Corollary \ref{Cor1.tr} gives
\begin{equation}
\operatorname{Tr}_d(\mathcal C(t)_{\E(t)},\mathcal E(t))\le c^{-1}\operatorname{Tr}_d(\mathcal C(t),\E)\le -c^{-1}\alpha d
\end{equation}
and
\begin{equation}
\operatorname{Tr}_d(K(t)_{\E(t)},\mathcal E(t))\le c\operatorname{Tr}_d(K(t),\mathcal E)\le c\operatorname{Tr}_d(K,\E).
\end{equation}
Without loss of generality we may assume that the operator $K$ is self-adjoint. Since it  is compact by the assumptions of the theorem, then $\mu_\infty(K)=0$ and according to \eqref{1.tr-mu}, there exists a constant $C_K$ depending only on $K$ such that
$$
\operatorname{Tr}_d(K,\E)\le C_K+\frac\alpha{2c^2} d.
$$
Inserting the obtained estimate into the RHS of \eqref{1.nice}, we end up with the desired estimate \eqref{1.small}. Estimate \eqref{1.fin} is an immediate corollary of \eqref{1.nice} and the theorem is proved.
\end{proof}

\section{Box counting dimension and volume contraction theorem}\label{s2}

In this section, we state the so-called volume contraction theorem which is one of the main  technical tools for estimating the dimension of the attractor, see \cite{bkBV}, \cite{TemamDS}, and start to verify its assumptions for the case of hyperbolic CHO equation. We begin with reminding the key definitions.

\begin{definition}\label{DEf2.frac} Let $\mathcal A$ be a compact set in a metric space $\Cal E$. By Hausdorff criterium, for every $\eb>0$, $\mathcal A$ can be covered by finitely-many balls of radius $\eb$ in $\Cal E$. Let $N_\eb(\mathcal A,\Cal E)$ be the minimal number of such balls which is enough to cover $\Cal E$. Then, the fractal (box-counting) dimension of $\mathcal A$ is defined as follows:
\begin{equation}\label{5.frac}
\dim_f(\mathcal A,\Cal E):=\limsup_{\eb\to0}\frac{\log N_\eb(\mathcal A,\Cal E)}{\log\frac1\eb}.
\end{equation}
It is worth mentioning that in the case when $\mathcal A$ is regular enough, e.g., when it is a  Lipschitz manifold, the fractal dimension coincides with the usual dimension of the manifold. However, for  irregular sets it can easily be non-integer. For instance, the dimension of the standard ternary Cantor set in $[0,1]$ is $\frac{\ln2}{\ln3}$. We mention also that this dimension is always finite if $\E$ is finite dimensional, but a priori it can be infinite in the case of infinite-dimensional spaces $\E$.
\end{definition}

\begin{definition}
\label{def.qd}
A map $S:\Cal A\to\Cal A$, where $\Cal A$ is a compact subset of a Banach space $\E$, is called uniform quasidifferentiable on $\A$ if for any $\xi\in\Cal A$ there exists a linear operator $S'(\xi)\in\mathcal L(\E,\E)$ ( the quasidifferential) such that for any $\xi_1,\ \xi_2\in\Cal A$
\begin{equation}\label{2.dif}
\|S(\xi_2)-S(\xi_1)-S'(\xi_1)(\xi_2-\xi_1)\|_\E=o(\|\xi_1-\xi_2\|_\E),
\end{equation}
holds uniformly with respect to $\xi_1,\ \xi_2\in\Cal A$ and,  in addition,
\begin{equation}
\label{2.Scont}
S'(\xi)\in C(\Cal A,\Cal L(\E)).
\end{equation}
\end{definition}
We remark that the difference between quasidifferential and Frechet derivative is in the fact that for quasidifferential we consider increments only in those directions $\xi_2-\xi_1$ where $\xi_1,\xi_2\in \Cal A\subset\E$ whereas for the Frechet derivative one should consider all possible directions in $\E$. In particular this may lead to non-uniqueness of operator $S'(\xi)$. However this essentially relaxes assumptions on $S$ and makes this property easier to verify, especially when extra smoothness of $\Cal A$ is known, which is usually the case in the attractors theory .

The main abstract theorem of volume contraction method can be formulated as follows
\begin{theorem}\label{Th2.AVolContr}
Let $\Cal A$ be a compact subset of a Hilbert space $\E$ which is invariant with respect to map $S$, that is $S\Cal A=\Cal A$. Suppose that $S$ is quasidifferentiable on $\Cal A$. Suppose also that $S'(\xi)$ contracts all $d$-dimensional volumes uniformly with respect to $\xi\in\Cal A$, that is
\begin{equation}\label{2.cont}
\omega_d(\Cal A, S):=\sup_{\xi\in\Cal A} \omega_d(S'(\xi),\E)<1,
\end{equation}
 Then the fractal dimension of $\Cal A$ in the space $\E$ is finite and the following estimate holds:
\begin{equation*}
\dim_f(\Cal A,\E)\leq d.
\end{equation*}
\end{theorem}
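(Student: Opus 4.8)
The plan is to iterate the quasidifferential along the orbit and use the chain-rule-type multiplicativity of $\omega_d$ together with a covering argument. First I would fix $d$ with $\omega_d(\mathcal A,S)=:\ell<1$ as in \eqref{2.cont}. The first step is to pass from the infinitesimal contraction of $S'(\xi)$ to a genuine (non-infinitesimal) contraction of $S$ itself on small sets: using the uniform quasidifferentiability \eqref{2.dif} and the continuity \eqref{2.Scont} on the compact set $\mathcal A$, one shows that for every $\eb>0$ small enough and every $\xi_0\in\mathcal A$, the image $S(B(\xi_0,r)\cap\mathcal A)$ is contained in the image of the affine map $\xi\mapsto S(\xi_0)+S'(\xi_0)(\xi-\xi_0)$ applied to a slightly enlarged ball, plus an error ball of radius $o(r)$. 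Equivalently, $S(B(\xi_0,r)\cap\mathcal A)$ can be covered by the image under $S'(\xi_0)$ of $B(\xi_0,r)$ blown up by a factor $(1+\delta)$ for $\delta$ as small as we like once $r$ is small.

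The second step is the ellipsoid covering lemma: since $\omega_d(S'(\xi_0),\E)<1$ uniformly, the image $S'(\xi_0)(B(0,r))$ is an ellipsoid (or its closure) whose semi-axes $\alpha_1\ge\alpha_2\ge\cdots$ satisfy $\alpha_1\alpha_2\cdots\alpha_d\le \ell\, r^d$; moreover $\alpha_1\le \|S'(\xi_0)\|\,r\le M r$ with $M:=\sup_{\xi\in\mathcal A}\|S'(\xi)\|_{\mathcal L(\E)}<\infty$ (finite by \eqref{2.Scont} and compactness of $\mathcal A$). A standard estimate then bounds the number of balls of radius $\theta r$ needed to cover such an ellipsoid by $C(\theta,M,d)\,\ell\,\theta^{-d}$, a bound \emph{uniform in $\xi_0\in\mathcal A$}. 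Choosing $\theta$ so that $q:=C(\theta,M,d)\,\ell\,\theta^{-d}\cdot(1+\delta)^d<1$ (possible since $\ell<1$, by first fixing $\theta$, then $\delta$), we conclude: for all sufficiently small $r$, every set $B(\xi_0,r)\cap\mathcal A$ with $\xi_0\in\mathcal A$ is covered by at most $q^{-1}\cdot q = $ at most $N_0$ balls of radius $\theta r$ centered at points which we may take in $\mathcal A$ (after shrinking radii by a harmless factor), with $\sum$ over the cover of (radius)$^d$ no larger than $q$ times the original $r^d$. Here $q<1$ and $\theta<1$ are fixed.

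The third step is iteration and the final count. Start from a cover of $\mathcal A$ by $N_\eb(\mathcal A,\E)$ balls of some small radius $\eb$. Using $S\mathcal A=\mathcal A$, apply $S$ and the covering lemma: $\mathcal A=S\mathcal A$ is covered by balls of radius $\theta\eb$ with the weighted sum $\sum r_i^d$ multiplied by $q$. Iterating $n$ times, $\mathcal A$ is covered by balls of radii $\theta^n\eb$ with $\sum_i r_i^d \le q^n\,N_\eb(\mathcal A,\E)\,\eb^d$, hence the number of balls in the $n$-th cover is at most $q^n N_\eb(\mathcal A,\E)\,\eb^d\,(\theta^n\eb)^{-d}=N_\eb(\mathcal A,\E)\,(q\theta^{-d})^n$. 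Feeding this into the definition \eqref{5.frac} with $\eb_n:=\theta^n\eb$ gives
\begin{equation*}
\dim_f(\mathcal A,\E)=\limsup_{n\to\infty}\frac{\log\big(N_\eb(\mathcal A,\E)(q\theta^{-d})^n\big)}{\log(\theta^n\eb)^{-1}}
=\frac{\log(q\theta^{-d})}{\log\theta^{-1}}=d-\frac{\log(1/q)}{\log(1/\theta)}\le d,
\end{equation*}
and in fact the limsup is finite because $q\theta^{-d}$ need not be below $1$; the point is only that it is a fixed constant, so the numerator grows linearly in $n$ while the denominator also grows linearly, yielding a finite ratio bounded by $d$ (and strictly below $d$ when $q\theta^{-d}<1$).

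The main obstacle I expect is the first two steps done \emph{uniformly} in $\xi_0\in\mathcal A$: the ``$o(\|\xi_1-\xi_2\|)$'' in \eqref{2.dif} must be promoted, via the assumed uniformity and the compactness of $\mathcal A$ (together with continuity of $\xi\mapsto S'(\xi)$), to a single modulus of continuity valid at every base point, and the ellipsoid-covering constant $C(\theta,M,d)$ must be seen to depend only on $d$, $\theta$ and the uniform norm bound $M$ — not on the individual operator $S'(\xi_0)$. Once these uniformities are in hand, the iteration and the logarithmic bookkeeping in the last step are routine. (An alternative to the last two paragraphs is to invoke the abstract Constantin–Foias–Temam / Douady–Oesterlé estimate from \cite{TemamDS}, whose hypotheses are exactly \eqref{2.dif}, \eqref{2.Scont} and \eqref{2.cont}; but the self-contained argument above is short enough to include.)
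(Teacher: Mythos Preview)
The paper does not give its own proof of this theorem: immediately after the statement it writes ``The proof of this theorem can be found in \cite{TemamDS} for the case of Hausdorff dimension and in \cite{chil} for the case of fractal dimension,'' and moves on. Your sketch is precisely the Douady--Oesterl\'e / Constantin--Foias--Temam covering-and-iteration argument that those references contain, so there is nothing to compare: you are reconstructing the cited proof rather than proposing an alternative.

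One spot in your write-up deserves tightening before it would pass as a self-contained proof. In Step~2 you assert that $S'(\xi_0)(B(0,r))$ can be covered by at most $C(\theta,M,d)\,\ell\,\theta^{-d}$ balls of radius $\theta r$. In infinite dimensions $S'(\xi_0)(B(0,r))$ need not be precompact, so this is not automatic; the point that makes it work is that $\omega_d(S'(\xi_0))\le\ell<1$ forces the $d$-th singular value $\alpha_d(S'(\xi_0))\le\ell^{1/d}<1$ (since $\alpha_d^d\le\alpha_1\cdots\alpha_d$), and hence all semi-axes beyond the $(d-1)$-st are uniformly below $\ell^{1/d}r$. The correct covering lemma therefore produces balls of radius comparable to $\alpha_d r$ (plus the $o(r)$ error from quasidifferentiability), with the number controlled by $\prod_{j<d}\alpha_j/\alpha_d$; the choice of the contraction ratio $\theta$ is then dictated by the singular values, not chosen freely in advance. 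Once you state the ellipsoid covering in this form (as in \cite{TemamDS} or \cite{chil}), the rest of your iteration and the final logarithmic computation go through as written.
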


The proof of this theorem can be found in \cite{TemamDS} for the case of Hausdorff dimension and in \cite{chil} for the case of fractal dimension.

Thus, we need to apply Theorem \ref{Th2.AVolContr} to the global attractor $\Cal A$ given by Theorem \ref{th.attr}. For the convenience of the reader, we remind also the definition of a global attractor, see e.g., \cite{bkBV} for more details.

\begin{definition}\label{Def2.attr} A set $\mathcal A\subset\mathcal E$ is a global attractor of a semigroup $S(t)$, $t\ge0$, acting in a metric space $\E$ if the following conditions are satisfied:
\par
1) The set $\mathcal A$ is compact in $\E$;
\par
2) It is strictly invariant, i.e., $S(t)\mathcal A=\mathcal A$ for all $t\ge0$;
\par
3) It attracts  the images of bounded sets of $\E$ as time tends to infinity, i.e., for any bounded set $B\subset\E$ and any neighbourhood $\mathcal O(\mathcal A)$ of the set $\mathcal A$ in $\E$, there is time $T=T(B,\mathcal O)$ such that
$$
S(t)B\subset\mathcal O(\mathcal A)
$$
for all $t\ge T$.
\end{definition}
In our situation the space $\E$ is the energy space defined by \eqref{e.sp} and the solution
semigroup $S(t)$ is defined by \eqref{St}. It is important that, due to Theorem \ref{th.attr}, the attractor $\mathcal A$ is bounded in $\E_2$ and, in particular, due to the embedding theorems,
\begin{equation}\label{2.c1}
\|u\|_{C^{1+\delta}(\R^3)}\le C=C_{\mathcal A},
\end{equation}
where $\delta<1/2$ and the constant $C$ is independent of $\xi_u\in\mathcal A$. Let us mention also that, due to the invariance of the attractor, it is generated by all bounded trajectories of \eqref{eqCHO} defined for all $t\in\R$:
\begin{equation}
\mathcal A=\mathcal K\big|_{t=0},
\end{equation}
where $\mathcal K\subset C_b(\R, \E)$ is the set of all complete bounded trajectories of \eqref{eqCHO}, see \cite{bkBV} for more details.

As usual, to estimate the dimension of the attractor $\mathcal A$, we will apply  Theorem \ref{Th2.AVolContr} with $S=S(T)$ where $T>0$ is a sufficiently large time. To this end, we first need to know that this map is quasidifferentiable on the attractor.
  As expected,  the quasidifferential of $S(t)$ can be found using equation in variations
\begin{theorem}\label{Th2.var}
Let assumptions of Theorem \ref{th.wp} hold. Then the solution operator $S(t)$ associated with problem \eqref{eqCHO} (see \eqref{St}) is uniformly quasidifferentiable on the attractor $\Cal A$  and its quasidifferential $S'(t,\xi_0)$ at point $\xi_0\in \Cal A$ can be found as $S'(t,\xi_0)\hat\xi:=\xi_w(t)$, where $w(t)$ solves the equation of variations
\begin{equation}\label{eq.var}
\begin{cases}
\Dt^2w+\Dt w+\alpha w+\Dx(\Dx w-f'(u(t))w)=0,\quad x\in \R^3,\\
\xi_w|_{t=0}=(w_0,w'_0):=\hat\xi\in \E,
\end{cases}
\end{equation}
where $u(t)=S(t)\xi_0$ is a Strichartz solution of equation \eqref{eqCHO} with the initial data $\xi_u(0)=\xi_0\in\mathcal A$.
\end{theorem}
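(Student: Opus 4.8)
The plan is to establish quasidifferentiability of $S(t)$ on $\mathcal A$ by showing that the map $\hat\xi\mapsto\xi_w(t)$, with $w$ the solution of the linearized problem \eqref{eq.var}, is a bona fide linear operator in $\mathcal L(\mathcal E,\mathcal E)$ and that the remainder
$\theta(t):=S(t)\xi_0^{(2)}-S(t)\xi_0^{(1)}-S'(t,\xi_0^{(1)})(\xi_0^{(2)}-\xi_0^{(1)})$
is higher order in $\|\xi_0^{(2)}-\xi_0^{(1)}\|_{\mathcal E}$, uniformly over $\xi_0^{(1)},\xi_0^{(2)}\in\mathcal A$. First I would check that \eqref{eq.var} is well posed in $\mathcal E$: it is a linear hyperbolic equation of the same structure as \eqref{eqCHO} with a potential term $\Dx(f'(u(t))w)$, and since $\xi_u\in\mathcal K$ lies on the attractor we have the uniform bound $\|u\|_{C^{1+\delta}(\R^3)}\le C_{\mathcal A}$ from \eqref{2.c1}; in particular $f'(u(t))$ is a bounded, Lipschitz-in-$x$, time-uniform multiplier (using $f\in C^2$ and assumption 3 of \eqref{f123} together with the $L^\infty$ bound on $u$). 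Standard energy estimates for \eqref{eq.var} — multiplying by $\Dt w$ in $\dot H^{-1}$ exactly as in the formal energy equality \eqref{0.en} — then give existence, uniqueness and the bound $\|\xi_w(t)\|_{\mathcal E}\le C e^{Kt}\|\hat\xi\|_{\mathcal E}$, so $S'(t,\xi_0)\in\mathcal L(\mathcal E,\mathcal E)$. Linearity in $\hat\xi$ is immediate from linearity of \eqref{eq.var}.

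Next I would derive the equation for the difference $v(t):=u^{(2)}(t)-u^{(1)}(t)$ of the two Strichartz solutions and for the remainder $\theta=v-w$. Subtracting the two copies of \eqref{eqCHO} gives
$$
\Dt^2 v+\Dt v+\alpha v+\Dx(\Dx v-(f(u^{(2)})-f(u^{(1)})))=0,
$$
and then subtracting \eqref{eq.var} for $w=v_{\mathrm{lin}}$ yields, with $\psi(t):=u^{(2)}(t)-u^{(1)}(t)$,
$$
\Dt^2\theta+\Dt\theta+\alpha\theta+\Dx\Dx\theta=\Dx\big(f(u^{(2)})-f(u^{(1)})-f'(u^{(1)})\psi\big).
$$
By Taylor's theorem with assumption 3 of \eqref{f123} and the uniform $L^\infty$ bound on $u^{(1)},u^{(2)}$ on the attractor, the right-hand side is controlled pointwise by $C|\psi|^2$, so it lies in $\dot H^{-1}$ with norm $\lesssim \|\psi\|_{L^2}\|\psi\|_{L^\infty}$ (or similar), which by the already-established Lipschitz continuity of $S(t)$ on $\mathcal A$ (following from Theorem \ref{th.wp}, or proved directly via the same energy argument) is $o(\|\xi_0^{(2)}-\xi_0^{(1)}\|_{\mathcal E})$ uniformly in time on $[0,T]$. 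Feeding this into the energy estimate for the $\theta$-equation — again the $\dot H^{-1}$ multiplier trick, using that $\theta|_{t=0}=0$, $\Dt\theta|_{t=0}=0$ — gives $\|\xi_\theta(t)\|_{\mathcal E}=o(\|\xi_0^{(2)}-\xi_0^{(1)}\|_{\mathcal E})$, which is \eqref{2.dif}.

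Finally, \eqref{2.Scont}, continuity of $\xi_0\mapsto S'(t,\xi_0)$ in $\mathcal L(\mathcal E)$, would be checked by taking two base points $\xi_0,\tilde\xi_0\in\mathcal A$, writing the equation for the difference of the corresponding linearized solutions $w-\tilde w$, which picks up the source term $\Dx((f'(\tilde u)-f'(u))\tilde w)$; since $f'$ is Lipschitz on bounded sets and $\|u-\tilde u\|_{C(\R^3)}\to 0$ as $\xi_0\to\tilde\xi_0$ (continuity of $S(t)$, plus the uniform $C^{1+\delta}$ bound and interpolation), this source is small uniformly on $[0,T]$ for $\|\hat\xi\|_{\mathcal E}\le 1$, and the same energy estimate closes the argument. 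The main obstacle, and the place where the unbounded domain $\R^3$ and the sub-quintic-but-not-bounded nonlinearity matter, is making the remainder estimate genuinely uniform over the whole attractor while keeping the source term of the $\theta$-equation in $\dot H^{-1}$: one must balance the $|\psi|^2$ nonlinear remainder between $L^2$ and $L^\infty$ (or $L^6$) norms and invoke the attractor regularity \eqref{2.c1} together with the dissipative Strichartz bound \eqref{2.en-dis} rather than any compactness/Poincaré-type argument available only on bounded domains; the rest is the by-now-standard energy machinery already used for \eqref{eqCHO} itself.
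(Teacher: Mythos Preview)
Your overall strategy coincides with the paper's: establish well-posedness of \eqref{eq.var} via the $\dot H^{-1}$ energy multiplier, derive the equation for $\theta=v-w$, and close by Gronwall using the attractor regularity \eqref{2.c1}. Two concrete points need correction, however.

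First, a bookkeeping slip: subtracting \eqref{eq.var} from the equation for $v$ removes $f'(u_1)w$, not $f'(u_1)\psi$ with $\psi=v$, so the source in the $\theta$-equation is $\Dx\big(f(u_2)-f(u_1)-f'(u_1)w\big)$. Writing
\[
f(u_2)-f(u_1)-f'(u_1)w=\int_0^1\!\big[f'(u_1+sv)-f'(u_1)\big]\,ds\;v+f'(u_1)\theta
\]
recovers your quadratic remainder together with an extra term linear in $\theta$ which is harmlessly absorbed by Gronwall; this is precisely the decomposition the paper uses.

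Second, and this is the genuine gap, the step ``controlled pointwise by $C|\psi|^2$, so it lies in $\dot H^{-1}$ with norm $\lesssim\|\psi\|_{L^2}\|\psi\|_{L^\infty}$'' does not go through. After multiplying by $(-\Dx)^{-1}\Dt\theta$, the source $\Dx G$ becomes $-(G,\Dt\theta)$, and since $\Dt\theta$ is only controlled in $\dot H^{-1}$ you must bound $\|G\|_{\dot H^1}=\|\Nx G\|_{L^2}$; a pointwise estimate on $G$ alone does not deliver this. The paper handles the gradient by estimating $\|f'(u_1+sv)-f'(u_1)\|_{W^{1,\infty}}\le E(\|v\|_{W^{1,\infty}})$ for some modulus $E(z)\to 0$ as $z\to 0$ (using only $f\in C^2$), and then obtains smallness of $\|v\|_{W^{1,\infty}}$, not merely $\|v\|_{L^\infty}$, by interpolating the energy bound $\|v\|_{H^1}\le C\|\xi_v\|_{\mathcal E}$ against the uniform $C^{1+\delta}$ bound \eqref{2.c1}. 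Without this $W^{1,\infty}$-level control of $v$ the remainder does not sit in the right space and the $\theta$-estimate fails to close in $\mathcal E$.
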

\begin{proof} Since the assertion of the theorem is standard, we give below only the sketch of its proof leaving the details to the reader. First we need to establish the well-posedness of the equation of variations \eqref{eq.var}. This is done in the following lemma.
\begin{lemma}\label{Lem2.var} Under the above assumptions equation \eqref{eq.var} possesses a unique solution $\xi_w\in C(0,T;\E)$ for any $\hat\xi\in\E$ and any $\xi_0\in\mathcal A$. Moreover, the following estimate holds:
\begin{equation}\label{2.var-est}
\|\xi_w(t)\|_{\E}\le Ce^{Kt}\|\xi_w(0)\|_{\E},\ \ t\ge0,
\end{equation}
where the constants $C$ and $K$ are independent of $\hat\xi\in\E$ and $\xi_0\in\mathcal A$.
\end{lemma}
\begin{proof}[Proof of the lemma] We restrict ourselves to formal derivation of estimate \eqref{2.var-est} which can be justified exactly as in \cite{SZ.CHO}. To this end, we multiply equation \eqref{eq.var} by $\Dt(-\Dx)^{-1}w$ and integrate over $x\in\R^3$. Then after the standard transformations, we get
\begin{multline}\label{2.var-est1}
\frac12\frac d{dt}\|\xi_w(t)\|^2_{\E}+\|\Dt w\|^2_{\dot H^{-1}}=\\=-(f'(u(t)w,\Dt w)\le \|\Nx(f'(u(t)w)\|_{L^2}\|\Dt w\|_{\dot H^{-1}}\le \|\xi_w(t)\|^2_{\E}+\|\Nx(f'(u(t)w)\|_{L^2}^2.
\end{multline}
Using now estimate \eqref{2.c1} together with the embedding $\dot H^1\cap\dot H^{-1}\subset H^1$ (here and below $H^s=H^s(\R^3)$ stands for the usual non-homogeneous Sobolev spaces), we estimate the last term on the RHS as follows:
\begin{multline}\label{2.good}
\|\Nx(f'(u(t)w)\|_{L^2}^2=\|f'(u)\Nx w+f''(u)\Nx u\, w\|^2_{L^2}\le\\\le \|f'(u)\|_{L^\infty}^2\|\Nx w\|^2_{L^2}+\|f''(u)\|_{L^\infty}^2\|\Nx u\|^2_{L^\infty}\|w\|^2_{L^2}\le C \|\xi_w(t)\|^2_{\E},
\end{multline}
where the constant $C$ is independent of $t$, $\xi_0\in\mathcal A$ and $\hat\xi\in\E$. The Gronwall lemma applied to the differential inequality \eqref{2.var-est1} gives the desired estimate \eqref{2.var-est} and finishes the proof of the lemma.
\end{proof}
We are now ready to verify estimate \eqref{2.dif}. Indeed, let $\xi_0^1,\xi_0^2\in\mathcal A$ and let $u_1(t)$ and $u_2(t)$ be the corresponding Strichartz solutions of \eqref{eqCHO} with $\xi_{u_1}(0)=\xi_0^1$ and $\xi_{u_2}(0)=\xi_0^2$ respectively. Then, the difference $v(t):=u_2(t)-u_1(t)$ solves
\begin{equation}\label{2.dif12}
\Dt^2 v+\Dt v+\Dx(\Dx v-[f(u_2(t))-f(u_1(t))])+\alpha v=0,\ \ \xi_v\big|_{t=0}=\xi_0^2-\xi_0^1.
\end{equation}
Using that
\begin{equation}\label{2.dif-non}
f(u_2(t))-f(u_1(t))=\int_0^1f'(u_1(t)+s v(t))\,ds\, v(t)
\end{equation}
and arguing analogously to Lemma \ref{Lem2.var}, we have
\begin{equation}\label{2.var-est4}
\|\xi_v(t)\|_{\E}\le Ce^{Kt}\|\xi_v(0)\|_{\E},\ \ t\ge0,
\end{equation}
where the constant $C$ and $K$ are independent of $t$ and $\xi_0^i\in\mathcal A$.
\par
Let now $w(t)$ be a solution of the equations of variations \eqref{eq.var} where $u(t)$ is replaced by $u_1(t)$ and $\hat\xi:=\xi_0^2-\xi_0^1$. Then, obviously, $S'(t,\xi_0^1)(\xi_0^2-\xi_0^1)=\xi_w(t)$ and we need to estimate the energy norm of the difference $\theta(t):=v(t)-w(t)$. This difference solves the equation
\begin{equation}
\Dt^2\theta+\Dt\theta+\Dx\(\Dx\theta-[f(u_2)-f(u_1)-f'(u_1)w]\)+\alpha\theta=0,\ \ \xi_\theta\big|_{t=0}=0.
\end{equation}
Multiplying this equation by $\Dt(-\Dx)^{-1}\theta$, analogously to \eqref{2.var-est1}, we get
\begin{equation}\label{2.var-est3}
\frac12\frac d{dt}\|\xi_\theta(t)\|^2_{\E}\le \|\xi_\theta(t)\|^2_{\E}+\|\Nx(f(u_2)-f(u_1)-f'(u_1)w)\|_{L^2}^2.
\end{equation}
Using \eqref{2.dif-non}, we transform the last term on the RHS as follows
$$
f(u_2(t))-f(u_1(t))-f'(u_1(t))w(t)=\int_0^1[f'(u_1(t)+s v(t))-f'(u_1(t))]\,ds\, v(t)+f'(u_1(t))\theta(t).
$$
Thus, analogously to \eqref{2.good}, we have
\begin{equation}
\|\Nx(f(u_2)-f(u_1)-f'(u_1)w)\|_{L^2}^2\le C\|\xi_\theta(t)\|^2_{\E}+\int_0^1\|f'(u_1+s v)-f'(u_1)\|^2_{W^{1,\infty}}\,ds\,\|\xi_v(t)\|^2_{\E}.
\end{equation}
Since $f\in C^2(\R)$ and $u_1,u_2\in C^1_b(\R^3)$ it is not difficult to show that there exists a function $E(z)$ such that
$\lim_{z\to0}E(z)=0$ and
$$
\int_0^1\|f'(u_1(t)+s v(t))-f'(u_1(t))\|^2_{W^{1,\infty}}\,ds\le E(\|v(t)\|_{W^{1,\infty}})
$$
uniformly with respect to $t\ge0$ and $\xi_0^1,\xi_0^2\in\mathcal A$. Using now estimate \eqref{2.c1} together with the interpolation, we get
$$
\|v(t)\|_{W^{1,\infty}}\le C\|v(t)\|_{H^1}^\eta\|v(t)\|_{C^{1+\delta}}^{1-\eta}\le C\|\xi_v(t)\|_{\E}^\eta
$$
for the properly chosen  exponent $0<\eta<1$. Thus, using also \eqref{2.var-est4}, we end up with
$$
\|\Nx(f(u_2)-f(u_1)-f'(u_1)w)\|_{L^2}^2\le Ce^{Kt}\|\xi_v(0)\|^2_{\E}\, E\(Ce^{\eta Kt}\|\xi_v(0)\|_{\E}^\eta\)+C\|\xi_\theta(t)\|^2_{\E}
$$
uniformly with respect to $t\ge0$ and $\xi_0^1,\xi_0^2\in\mathcal A$. The Gronwall lemma applied to \eqref{2.var-est4} gives now the desired estimate \eqref{2.dif}. The continuity of the operator $\xi_0\to S'(t,\xi_0)$ can be established analogously and the theorem is proved.
\end{proof}
\begin{remark} We emphasize  that we have essentially used the extra regularity \eqref{2.c1} for the solutions belonging to the attractor in the proof given above, so this proof gives only quasidifferentiability of the operators $S(t)$ on the attractor and does not work for proving its Frechet differentiability in $\E$. Nevertheless, the Frechet differentiability of the solution operators $S(t)$ is  likely  true, but its proof is technically much more complicated (since instead of \eqref{2.c1} we have only the estimate in $\E$ in this case) and should also involve Strichartz-type estimates for the equations of variations \eqref{eq.var}. Since we need not Frechet differentiability for the proof of our main result, we will not give any more details here.
\end{remark}

\section{Proof of the main theorem}\label{s3}
In this section, we will prove our main result -- Theorem \ref{Th0.main} which establishes the finiteness of the fractal dimension of the global attractor $\mathcal A$ of the hyperbolic CHO equation in the energy space $\E$. To this end, we will use the volume contraction method and Theorem \ref{Th2.AVolContr}. According to this theorem, it is sufficient to verify that there exists time $T>0$ and $d\in\R_+$ such that
\begin{equation}\label{3.contr}
\omega_d(S'(T,\xi_0),\E)\le\frac12
\end{equation}
for all $\xi_0\in\mathcal A$. Indeed, the uniform quasidifferentiability of the map $S(T)$ is verified in Theorem \ref{Th2.var} and \eqref{3.contr} guarantees that $\omega_d(\mathcal A, S(T))<1$ and Theorem \ref{Th2.AVolContr} gives then that the fractal dimension of $\mathcal A$ does not exceed $d$. Moreover, equation of variations \eqref{eq.var} has the form of \eqref{1.lin-eq} in $\E$:
$$
\frac d{dt}\xi_w(t)=L(t,\xi_0)\xi_w(t),
$$
where
\begin{equation}
L(t,\xi_0):=\(\begin{matrix} 0 & 1\\-\alpha &-1\end{matrix}\)+\(\begin{matrix} 0&0\\-\Dx &0\end{matrix}\)\(\begin{matrix}\Dx-f'(u(t)) & 0\\0&0\end{matrix}\)
\end{equation}
and $\xi_u(t):=S(t)\xi_0$. Thus, the Liouville formula can be applied to verify assumption \eqref{3.contr}. According to
Theorem \ref{Th1.main}, we only need to find the equivalent norms $\|\cdot\|_{\E(t,\xi_0)}$ and the operators $\Cal C(t,\xi_0)$ and $\Cal K(t,\xi_0)$ which satisfy the assumptions of Theorem \ref{Th1.main} uniformly with respect to $\xi_0\in\mathcal A$.
In order to do so, we multiply equation \eqref{eq.var} by $(-\Dx)^{-1}(\Dt w+\delta v)$ where $\delta>0$ is a small parameter which will be specified below and integrate over $x\in\R^3$. Then, after the straightforward transformations, we will have at least formally
\begin{multline}\label{3.energy}
\frac12\frac d{dt}\(\|\xi_w\|^2_{\E}+2\delta(\Dt w,w)_{\dot H^{-1}}+\delta\|w\|^2_{\dot H^{-1}}\)=\\=-\((1-\delta)\|\Dt w\|^2_{\dot H^{-1}}+\delta\|w(t)\|^2_{\dot H^1}+\alpha\delta\|w(t)\|^2_{\dot H^{-1}}\)-(f'(u)w,\Dt w)-\delta(f'(u)w,w).
\end{multline}
The justification of this version of the energy equality can be done exactly as in \cite{SZ.CHO}. Formula \eqref{3.energy} prompts to take the quadratic form on the LHS of it as the desired equivalent metric $\|\cdot\|_{\E(t,\xi_0)}^2$. However, as not difficult to see, this will not work due to the presence of a "bad" term $(f'(u)w,\Dt w)$ on the RHS of this formula. Indeed, in contrast to the case of damped wave equation, say with Dirichlet boundary conditions, where natural energy space is $H^1_0(\Omega)\times L^2(\Omega)$, the term $(f'(u)w,\Dt w)$ can not be estimated via norms which are {\it compact} in the energy space $\E=H^1_0(\Omega)\times H^{-1}(\Omega)$ even if $f'$ is bounded and the equation is considered in a smooth bounded domain $\Omega$. In addition, the fact that the underlying domain $\Omega=\R^3$ is unbounded and the embedding $H^1(\Omega)\subset L^2(\Omega)$ is no more compact also requires an extra care. So, we need to proceed in a more delicate way involving the time dependent metrics and the properly chosen cut off functions. Namely, let $\psi_R=\psi_R(x)$ be the smooth cut-off function such that
$$
0\le\psi_R(x)\le1,\ \ \psi_R(x)=1 \text{ if } |x|\le R-1\ \text{and}\ \psi_R(x)=0\ \text{if}\ |x|\ge R,
$$
where $R\gg1$ is one more parameter which will be specified later. Let also
\begin{equation}\label{3.norms}
\|\xi_w(t)\|^2_{\E(t,\xi_0)}:=\|\xi_w\|^2_{\E}+2\delta(\Dt w,w)_{\dot H^{-1}}+\delta\|w\|^2_{\dot H^{-1}}+(f'(u)w,w)+L\|(-\Dx+1)^{-1/2}(\psi_Rw)\|^2_{L^2},
\end{equation}
where $L\gg1$ is one more parameter. Then, due to the energy equality \eqref{3.energy}, we have
\begin{multline}\label{3.energy1}
\frac12\frac d{dt}\|\xi_w(t)\|^2_{\E(t,\xi_0)}=-\((1-\delta)\|\Dt w\|^2_{\dot H^{-1}}+\delta\|w(t)\|^2_{\dot H^1}+\alpha\delta\|w(t)\|^2_{\dot H^{-1}}\)-\\-\delta(f'(u)w,w)+\frac{1}{2}(f''(u)\Dt u,w^2)+L((-\Dx+1)^{-1}(\psi_R w),\psi_R\Dt w):=(M(t,\xi_0)\xi_w,\xi_w)_{\E}.
\end{multline}
Now one can see that the "bad" term on the RHS is killed and the next lemma shows that the norms \eqref{3.norms} are equivalent to the standard norm of $\E$.

\begin{lemma}\label{Lem3.eq} Let the above assumptions hold. Then, there exist constants $\delta$, $R$ and $L$ such that
\begin{equation}\label{3.equiv}
c^{-1}\|\xi\|^2_{\E}\le\|\xi\|_{\E(t,\xi_0)}^2\le c\|\xi\|^2_{\E},\ \ \xi\in\E
\end{equation}
where the constant $c$ is independent of $\xi\in\E$, $t\in\R$ and $\xi_0\in\mathcal A$.
\end{lemma}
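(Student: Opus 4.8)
The goal is to show that the modified norm defined in \eqref{3.norms} is uniformly equivalent to the standard energy norm $\|\cdot\|_\E$, with constants independent of $t$, $\xi_0\in\mathcal A$, once the parameters $\delta$, $R$, $L$ are chosen appropriately. The plan is to treat the three extra terms
$$
2\delta(\Dt w,w)_{\dot H^{-1}}+\delta\|w\|^2_{\dot H^{-1}},\qquad (f'(u)w,w),\qquad L\|(-\Dx+1)^{-1/2}(\psi_R w)\|^2_{L^2}
$$
one at a time and show each is controlled by $\eta\|\xi_w\|_\E^2$ for a small $\eta$ (which we can make as small as we like by shrinking $\delta$ and — as I explain below — using that $\psi_R$ lives at fixed scale), so that their sum is a small perturbation of $\|\xi_w\|_\E^2$ and \eqref{3.equiv} follows with $c$ close to $1$. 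Note the sign of $(f'(u)w,w)$ is not definite (assumption \eqref{f123} controls $f$ but not $f'$), so it genuinely has to be absorbed rather than used.

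First I would handle the $\delta$-terms. By Cauchy--Schwarz, $2\delta|(\Dt w,w)_{\dot H^{-1}}|\le \delta\|\Dt w\|_{\dot H^{-1}}^2+\delta\|w\|_{\dot H^{-1}}^2$, and since $\|\Dt w\|_{\dot H^{-1}}^2\le\|\Dt w\|_{\dot H^1}^2$ only after interpolation — actually better to note $\|\Dt w\|_{\dot H^{-1}}$ need not be comparable to $\|\Dt w\|_{\dot H^1}$ in $\R^3$, so instead one bounds $\|\Dt w\|_{\dot H^{-1}}$ crudely: in fact the definition \eqref{e.sp} of $\|\xi_u\|_\E$ contains $\|\Dt u\|_{\dot H^1}$ and both $\|u\|_{\dot H^{-1}}$, $\|u\|_{\dot H^1}$; the term $\delta\|w\|_{\dot H^{-1}}^2$ is directly $\le\delta\alpha^{-1}\|\xi_w\|_\E^2$ and for $2\delta(\Dt w,w)_{\dot H^{-1}}$ one writes it as $\le \delta\big(\|\Dt w\|_{\dot H^{-1}}^2+\|w\|_{\dot H^{-1}}^2\big)$ and estimates $\|\Dt w\|_{\dot H^{-1}}^2$ by interpolating between $\dot H^1$ and $\dot H^{-1}$ — but $\Dt w$ is only controlled in $\dot H^1$ by the energy, so one uses $\|\Dt w\|_{\dot H^{-1}}\le C\|\Dt w\|_{\dot H^1}$ is false globally; the clean route is to keep $\|(-\Dx)^{-1/2}\Dt w\|$ as genuinely lower-order only against $\|w\|_{\dot H^{-1}}$ via Young, landing everything on $\|w\|_{\dot H^{-1}}^2+\|w\|_{\dot H^1}^2\le C\|\xi_w\|_\E^2$, so the whole $\delta$-block is $\le C\delta\|\xi_w\|_\E^2$.

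Next the term $(f'(u)w,w)$. Here I would use the attractor regularity \eqref{2.c1}, which gives $\|f'(u)\|_{L^\infty}\le C_{\mathcal A}$ uniformly for $\xi_u\in\mathcal A$. But $w$ is only in $\dot H^1\cap\dot H^{-1}$, so $(f'(u)w,w)\le C_{\mathcal A}\|w\|_{L^2}^2$, and in $\R^3$ one has $\|w\|_{L^2}^2\le C\|w\|_{\dot H^1}\|w\|_{\dot H^{-1}}\le C\|\xi_w\|_\E^2$ by interpolation between homogeneous Sobolev spaces. This is $\le C_{\mathcal A}\|\xi_w\|_\E^2$ with no small factor, so this term alone is \emph{not} a small perturbation — this is the main obstacle, and it is exactly why the last term $L\|(-\Dx+1)^{-1/2}(\psi_R w)\|^2_{L^2}$ is present. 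The resolution, following the structure of \eqref{3.energy1}, is that $(f'(u)w,w)$ need not be absorbed into the norm equivalence at this stage at all: one should instead show that $(f'(u)w,w)$ together with $L\|(-\Dx+1)^{-1/2}(\psi_R w)\|^2_{L^2}$ is comparable to a genuinely small multiple of $\|\xi_w\|_\E^2$ after choosing $R$ large and then $L$ large. Concretely: split $w=\psi_R w+(1-\psi_R)w$; on the far region $\{|x|\ge R-1\}$ the function $f'(u)$ decays because $u$ belongs to the attractor, which is bounded in $\E_2$ and hence $u(x)\to0$ as $|x|\to\infty$ uniformly on $\mathcal A$, so $\|f'(u)(1-\psi_{R-1})\|_{L^\infty}\le \kappa(R)$ with $\kappa(R)\to0$; thus $(f'(u)(1-\psi_{R-1})w,w)\le\kappa(R)\|w\|_{L^2}^2\le C\kappa(R)\|\xi_w\|_\E^2$, which is small for $R$ large. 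On the near region the contribution is $(f'(u)\psi_{R-1}w,w)$, bounded by $C_{\mathcal A}\|\psi_{R-1}w\|_{L^2}^2$, and this is precisely dominated by $\|(-\Dx+1)^{-1/2}(\psi_R w)\|_{L^2}^2$ up to the full $\|\psi_R w\|_{H^1}^2\le C\|\xi_w\|_\E^2$ contribution — here one interpolates $\|\psi_{R-1}w\|_{L^2}^2\le C\|(-\Dx+1)^{1/2}(\psi_R w)\|_{L^2}\,\|(-\Dx+1)^{-1/2}(\psi_R w)\|_{L^2}$, so by Young's inequality $C_{\mathcal A}\|\psi_{R-1}w\|_{L^2}^2\le \varepsilon\|\xi_w\|_\E^2+C_{\varepsilon,R,\mathcal A}\|(-\Dx+1)^{-1/2}(\psi_R w)\|_{L^2}^2$. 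Summing up: choose $R$ so large that $C\kappa(R)<\varepsilon$, then the total extra-terms block is bounded by $(C\delta+2\varepsilon)\|\xi_w\|_\E^2 + C_{\varepsilon,R,\mathcal A}\|(-\Dx+1)^{-1/2}(\psi_R w)\|_{L^2}^2$. This last residual is exactly $L^{-1}$ times the $L$-term in the norm, hence once $L\ge 2C_{\varepsilon,R,\mathcal A}$ it is reabsorbed into the LHS, and with $\delta,\varepsilon$ small the two-sided bound \eqref{3.equiv} holds with $c$ uniform in $t$ and $\xi_0\in\mathcal A$. The order of choice of parameters — first $R$ (using decay of attractor profiles at infinity), then $\delta$ small, then $L$ large — is the essential point, and the only genuinely nontrivial input is the uniform spatial decay of $u$ on the attractor coming from its boundedness in $\E_2$.
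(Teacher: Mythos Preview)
Your overall strategy matches the paper's: the upper bound is immediate from $\|f'(u)\|_{L^\infty}\le C_{\mathcal A}$ and $\|w\|_{L^2}^2\le C\|\xi_w\|_{\E}^2$, and for the lower bound one splits into near and far regions, absorbing the near-field contribution of $(f'(u)w,w)$ into the $L$-term via the interpolation
$(\psi_R w,w)\le \|(-\Dx+1)^{-1/2}(\psi_R w)\|_{L^2}\|(-\Dx+1)^{1/2}w\|_{L^2}$
and Young's inequality. Two points deserve correction.

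First, your confusion over the $\delta$-terms is caused by a typo in \eqref{e.sp}: the energy norm is
$\|\xi_w\|_{\E}^2=\|\Dt w\|_{\dot H^{-1}}^2+\|w\|_{\dot H^1}^2+\alpha\|w\|_{\dot H^{-1}}^2$,
as dictated by the energy identity \eqref{0.en} and by $\E=[\dot H^1\cap\dot H^{-1}]\times\dot H^{-1}$. With this, $|2\delta(\Dt w,w)_{\dot H^{-1}}|\le\delta\|\Dt w\|_{\dot H^{-1}}^2+\delta\|w\|_{\dot H^{-1}}^2\le C\delta\|\xi_w\|_{\E}^2$ is immediate; no interpolation gymnastics are needed.

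Second, and this is the genuine gap: your far-field estimate $\|f'(u)(1-\psi_{R-1})\|_{L^\infty}\le\kappa(R)\to0$ is \emph{false} in general. The assumptions \eqref{f123} give only $f'(0)\ge0$, not $f'(0)=0$ (take $f(u)=u+u^3$). Since $u(x)\to0$ uniformly on $\mathcal A$, you get $f'(u(x))\to f'(0)$ at infinity, which need not vanish. The paper's fix is to write
\[
(f'(u)w,w)=(\psi_R f'(u)w,w)+((1-\psi_R)[f'(u)-f'(0)]w,w)+f'(0)((1-\psi_R)w,w).
\]
The last term is $\ge0$ because $f'(0)\ge0$ and can simply be dropped in the lower bound. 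The middle term is bounded by $C\|(1-\psi_R)u\|_{L^\infty}\|w\|_{L^2}^2$ via the mean value theorem and \eqref{2.c1}, and \emph{this} tends to zero uniformly on $\mathcal A$ by compactness and interpolation, exactly as you intended. The first term is then handled by your near-field argument. So the missing ingredient is precisely the observation $f'(0)\ge0$ (a consequence of $f(u)u\ge0$) and the corresponding splitting; once you insert it, your proof goes through and coincides with the paper's.
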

\begin{proof}Indeed, the right inequality is obvious since due to the control \eqref{2.c1}
$$
|(f'(u)w,w)|\le \|f'(u)\|_{L^\infty}\|w\|^2_{L^2}\le C\|\xi_w\|^2_{\E}.
$$
So, we only need to verify the left one. To this end, we note that assumption \eqref{f123}.1 on the nonlinearity $f$ implies that $f'(0)\ge0$ and, therefore, taking into the account \eqref{2.c1},
\begin{multline}
(f'(u)w,w)=(\psi_R f'(u)w,w)+((1-\psi_R)[f'(u)-f'(0)]w,w)+f'(0)((1-\psi_R)w,w)\ge\\\ge-C(\psi_R w,w) -C\|(1-\psi_R)u\|_{L^\infty}\|w\|^2_{L^2}\ge\\\ge -C\|(-\Dx+1)^{-1/2}(\psi_Rw)\|_{L^2}((-\Dx+1)w,w)^{1/2}-C\|(1-\psi_R)u\|_{L^\infty}\|w\|^2_{L^2}\ge\\\ge -C_\eb\|(-\Dx+1)^{-1/2}(\psi_Rw)\|_{L^2}^2-\eb\|\xi_w\|^2_{\E}-C\|(1-\psi_R)u\|_{L^\infty}\|w\|^2_{L^2},
\end{multline}
where $\eb>0$ is arbitrary.
Remind that according Theorem \ref{th.attr}, the attractor $\mathcal A$ is {\it compact} in $\E$ and is bounded in $\E_2$. Therefore, as not difficult to show using the interpolation inequality, for every $\eb>0$ there exists $R=R(\eb)$ such that
\begin{equation}\label{3.tail}
\|(1-\psi_R)u\|_{L^\infty}\le C\|u\|_{L^\infty(|x|>R-1)}\le\eb
\end{equation}
uniformly with respect to $\xi_u\in\mathcal A$.
Thus, for $R\ge R(\eb)$,
\begin{multline}\label{3.fpgood}
(f'(u)w,w)\ge -C\eb(\|w\|^2_{L^2}+\|\xi_w\|^2_{\E})-\\-C_\eb\|(-\Dx+1)^{-1/2}(\psi_Rw)\|^2_{L^2}\ge
-C\eb\|\xi_w\|^2_{\E}-C_\eb\|(-\Dx+1)^{-1/2}(\psi_Rw)\|^2_{L^2},
\end{multline}
where we have implicitly used that $\dot H^1\cap\dot H^{-1}\subset L^2$. Thus,
\begin{multline}\label{3.norms1}
\|\xi_w(t)\|^2_{\E(t,\xi_0)}\ge(1-C\eb)\|\xi_w\|^2_{\E}+2\delta(\Dt w,w)_{\dot H^{-1}}+\\+\delta\|w\|^2_{\dot H^{-1}}+(L-C_\eb)\|(-\Dx+1)^{-1/2}(\psi_Rw)\|^2_{L^2}.
\end{multline}
This estimate implies the desired right inequality of \eqref{3.equiv} if $\delta>0$ and $\delta>0$ are small enough, $L\ge C_\eb$ and $R\ge R(\eb)$ and finishes the proof of the lemma.
\end{proof}
Thus, to finish the proof of our main result, we only need to verify that the operator $M(t,\xi_0)$ defined in \eqref{3.energy1} satisfies assumptions \eqref{1.split}, \eqref{1.neg} and \eqref{1.comp} of Theorem \ref{Th1.main}. By elementary estimates we derive
\begin{multline}\label{3.est-add}
((-\Dx+1)^{-1}(\psi_R w),\psi_R\Dt w)\le \|\Nx(\psi_R(-\Dx+1)^{-1}(\psi_R w))\|_{L^2}\|\Dt w\|_{\dot H^{-1}}\le\\\le
C_R\|\psi_R w\|_{L^2}\|\xi_w\|_{\E}.
\end{multline}
Using now the fact that $u$ is uniformly bounded in $C_b(\R^3)$, $\Dt u$ is uniformly bounded in $H^1(\R^3)$ and the embedding $H^1(\R^3)\subset L^6(\R^3)$ together with the H\"older inequality, we get
\begin{multline}\label{3.est10}
|(f''(u)\Dt u,w^2)|\le C(|\Dt u|,\psi_Rw^2)+C((1-\psi_R)|\Dt u|,w^2)\le\\\le C\|\Dt u\|_{L^3}\|\psi_Rw\|_{L^2}\|w\|_{L^6}+C\|(1-\psi_R)\Dt u\|_{L^3}\|w\|_{L^6}\|w\|_{L^2}\le\\\le C\|\psi_R w\|_{L^2}\|\xi_w\|_{\E}+C\|(1-\psi_R)\Dt u\|_{L^3}\|\xi_w\|^2_{\E}.
\end{multline}
Since the global attractor $\mathcal A$ is compact in $\E$ and $\Dt u$ is uniformly bounded in $L^6$, analogously to \eqref{3.tail} the
interpolation inequality gives the following tail estimate: for every $\eb>0$ there exists $R=R(\eb)$ such that
\begin{equation}\label{3.tail1}
\|(1-\psi_R)\Dt u\|_{L^3}\le C\|\Dt u\|_{L^3(|x|>R-1)}\le\eb
\end{equation}
uniformly with respect to $\xi_u\in\mathcal A$.  Therefore, estimate \eqref{3.est10} for $R\ge R(\eb)$ reads
\begin{equation}\label{3.estfpp}
|(f''(u)\Dt u,w^2)|\le C\eb\|\xi_w\|^2_\E+C_{\eb}\|\psi_R w\|^2_{L^2}
\end{equation}
and combining estimates \eqref{3.est-add}, \eqref{3.estfpp} together with \eqref{3.fpgood}, we finally see that, for sufficiently small $\delta$ and $\eb$ and sufficiently large $R=R(\eb)$,
\begin{equation}\label{3.split}
(M(t,\xi_0)\xi_w,\xi_w)\le -\gamma\|\xi_w\|^2_{\E}+C_1\|\psi_R w\|^2_{L^2}=-\gamma\|\xi_w\|^2_{\E}+C_1(K\xi_w,\xi_w)_\E,
\end{equation}
where $\gamma>0$ and $C_1$ are independent of $\xi_w,\xi_0\in\E$ and $t\in\R$ and the operator $K$ is defined via
\begin{equation}\label{3.k}
(K\xi_w,\xi_w)_{\E}=\|\psi_R w\|^2_{L^2}
\end{equation}
(indeed, due to the Riesz representation theorem, $K$ is a bounded nonnegative self-adjoint operator in $\E$). Thus, \eqref{1.split} and \eqref{1.neg} are verified with $\mathcal C(t,\xi_0)=-\gamma Id$ and $\mathcal K(t,\xi_0):=C_1K$ and to finish the proof of the main result, we only need to verify that the operator $K$ is compact. This is done in the following lemma.

\begin{lemma} Let the above assumptions hold. Then the operator $K\in\mathcal L(\E,\E)$ defined via the quadratic form \eqref{3.k} is compact.
\end{lemma}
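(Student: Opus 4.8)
The plan is to show that $K$ factors through a compact operator by exhibiting it as the product of a bounded operator, a genuinely compact embedding on a bounded region, and another bounded operator. First I would observe that the quadratic form \eqref{3.k} identifies $K$ with the composition $\xi_w\mapsto w\mapsto \psi_R w\mapsto (\text{Riesz representative in }\E)$; more precisely, writing $P:\E\to \dot H^1\cap\dot H^{-1}$, $\xi_w=(w,\Dt w)\mapsto w$, and $\iota$ the restriction-and-localisation operator $w\mapsto \psi_R w$ landing in $L^2(B_R)$ where $B_R=\{|x|\le R\}$, and $\iota^*$ its adjoint, we have $K = (\text{embedding})^* \circ \iota^* \circ \iota\circ P$ up to the obvious identifications. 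The operator $P$ is clearly bounded from $\E$ to $\dot H^1\cap\dot H^{-1}$ (indeed it is a coordinate projection followed by the identity). The key point is the map $w\mapsto \psi_R w$ viewed as an operator from $\dot H^1\cap\dot H^{-1}\hookrightarrow H^1(\R^3)$ into $L^2(\R^3)$: since $\psi_R$ is smooth and compactly supported in $B_R$, this map sends bounded sets of $H^1(\R^3)$ into bounded sets of $H^1_0(B_R)$, and the Rellich--Kondrachov theorem gives that $H^1_0(B_R)\hookrightarrow L^2(B_R)$ is compact. Composing a compact operator with bounded ones on either side yields a compact operator, and since $K$ is a self-adjoint nonnegative operator whose square root has range in this compact chain, compactness of $K$ follows.

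The steps in order are: (i) record that $\dot H^1\cap\dot H^{-1}\subset H^1(\R^3)$ continuously, so that for $\xi_w\in\E$ the component $w$ lies in $H^1(\R^3)$ with $\|w\|_{H^1}\le C\|\xi_w\|_\E$; (ii) note that multiplication by $\psi_R$ is bounded $H^1(\R^3)\to H^1_0(B_R)$ because $\Nx(\psi_R w)=\psi_R\Nx w + w\Nx\psi_R$ and both $\psi_R$ and $\Nx\psi_R$ are bounded with support in $B_R$; (iii) invoke the compactness of the embedding $H^1_0(B_R)\hookrightarrow L^2(B_R)$; (iv) conclude that the linear map $T:\E\to L^2(\R^3)$, $\xi_w\mapsto \psi_R w$, is compact, since it is the composition of the bounded maps in (i)--(ii) with the compact embedding in (iii); (v) observe that $(K\xi_w,\xi_w)_\E=\|T\xi_w\|_{L^2}^2=(T^*T\xi_w,\xi_w)_\E$, so $K=T^*T$; since $T$ is compact, $T^*$ is bounded and $T^*T$ is compact, which is exactly the claim. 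One should also remark that $T^*T$ is automatically self-adjoint and nonnegative, consistent with the assertion made just before the lemma.

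I do not expect a serious obstacle here; this is the standard "localisation plus Rellich" argument. The only point requiring a little care is making sure that the various identifications of $\E$-valued objects with $L^2$-valued objects are carried out on the correct factor of the energy space $\E=[\dot H^1\cap\dot H^{-1}]\times \dot H^{-1}$ — namely that $K$ only "sees" the first component $w$, and that the $\dot H^{-1}$ part of the $\E$-norm is strong enough (together with the $\dot H^1$ part) to dominate the full $H^1(\R^3)$ norm of $w$; this is precisely the embedding $\dot H^1\cap\dot H^{-1}\subset L^2\subset H^{-1}$, combined with $\dot H^1\subset \dot H^1$, giving $H^1$ control. Everything else is a routine composition of bounded and compact maps.
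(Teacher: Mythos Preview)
Your proposal is correct and follows essentially the same approach as the paper: both use the continuous embedding $\dot H^1\cap\dot H^{-1}\subset H^1(\R^3)$, localisation by $\psi_R$ into $H^1(B_R)$, and the Rellich--Kondrachov compactness of $H^1(B_R)\hookrightarrow L^2(B_R)$. The only cosmetic difference is that you package the argument as the factorisation $K=T^*T$ with $T$ compact, whereas the paper derives the pointwise bound $\|K\xi\|_\E\le C\|\psi_R\xi_1\|_{L^2}$ directly from the bilinear form and then runs the sequential compactness argument by hand; these are two ways of writing the same proof.
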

\begin{proof} Indeed, by the parallelogram law and the embedding $\dot H^1\cap\dot H^{-1}\subset L^2$.
$$
(K\xi,\bar\xi)_\E=(\psi_R\xi_1,\psi_R\bar\xi_1)\le \|\psi_R\xi_1\|_{L^2}\|\psi_R\bar\xi_1\|_{L^2}\le
C\|\psi_R\xi_1\|_{L^2}\|\bar\xi\|_{\E},
$$
where $\xi=(\xi_1,\xi_2)\in\E$ and $\bar\xi=(\bar\xi_1,\bar\xi_2)\in\E$. Therefore,
\begin{equation}\label{3.k-est}
\|K\xi\|_{\E}=\sup_{\bar\xi\in\E,\,\bar\xi\neq 0}\frac{(K\xi,\bar\xi)_{\E}}{\|\bar\xi\|_{\E}}\le C\|\psi_R\xi_1\|_{L^2}.
\end{equation}
Let now $\xi^n=(\xi^n_1,\xi^n_2)\in\E$ be a bounded sequence in $\E$. Then, due to the embedding $\dot H^1\cap\dot H^{-1}\subset H^1$,
the sequence $\xi_1^n$ is bounded in $H^1(\R^3)$. Since $\psi_R$ is smooth and has a finite support, the sequence $\psi_R\xi_1^n$ is bounded in $H^1(|x|<R)$ and, finally, since the embedding $H^1(|x|<R)\subset L^2(|x|<R)$ is compact, the sequence $\psi_R\xi_1^n$ is precompact in $L^2(\R^3)$. Thus, there exists a convergent in $L^2$ subsequence of $\psi_R\xi^n_1$ which we also denote by $\psi_R\xi_1^n$ for simplicity. Then, from \eqref{3.k-est}, we infer
$$
\|K(\xi^n-\xi^m)\|_{\E}\le C\|\psi_R\xi^n_1-\psi_R\xi_1^m\|_{L^2}
$$
and, therefore, $K\xi^n$ is a Cauchy sequence in $\E$. Since $\E$ is complete $K\xi^n$ is convergent and $K$ is compact. So, the lemma is proved.
\end{proof}
Thus, all of the assumptions of Theorem \ref{Th1.main} are verified and, consequently, estimate \eqref{3.contr} is proved and the main Theorem \ref{Th0.main} on the finite-dimensionality of the global attractor $\mathcal A$ in the energy phase space $\E$ is also proved.

\end{document}